 \def\X{\mathcal X} \def\C{\mathbb{C}} 
\def\N{\mathbb{N}}   
  \def\H{\mathcal H} 
\def\S{\mathcal S} \def\SS{\mathcal{S}}\def\G{\mathcal{G}}
    \def\e{\mathfrak e}
\def\I{{\rm 1\kern-.26em I}}
\def\Op{\mathfrak{Op}}
\def\X{\mathcal X}
\def\1{\mathfrak{1}}
\def\LL{\mathfrak L}
\def\M{\mathbf M}
\def\N{\mathbf N}
\def\R{\mathbf R}
\def\I{I\hspace{-1.7pt}I\hspace{-1pt}{\rm d}}
\def\({\left(}
\def\){\right)}
\def\[{\left[}
\def\]{\right]}
\def\<{\left<}
\def\>{\right>}
\newtheorem{Theorem}{Theorem}[section]
\newtheorem{Remark}[Theorem]{Remark}
\newtheorem{Lemma}[Theorem]{Lemma}
\newtheorem{Corollary}[Theorem]{Corollary}
\newtheorem{Proposition}[Theorem]{Proposition}
\newtheorem{Definition}[Theorem]{Definition}
\newtheorem{Example}[Theorem]{Example}
\numberwithin{equation}{section}
\begin{document}

\title{Abstract composition laws\\ and their modulation spaces}

\author{Marius M\u antoiu $^1$   and Radu Purice$^2$}

%\date{\today}
\maketitle

\footnote*{
\textbf{Key Words:}  Pseudodifferential operator; phase
space; algebra; Weyl product; modulation space.

\textbf{2010 Mathematics Subject Classification: 35S05; 42B35;43A32}

%\begin{quote}
\begin{itemize}
\item[$^1$] Departamento de Matem\'aticas, Universidad de Chile, Las Palmeras 3425, Casilla 653,
Santiago, Chile, \\
Email: {\tt mantoiu@uchile.cl}
\item[$^2$] Institute
of Mathematics Simion Stoilow of the Romanian Academy, P.O.  Box
1-764, Bucharest, \\ RO-70700, Romania,
Email: {\tt radu.purice@imar.ro}

\end{itemize}}
%\end{quote}}

%\date{\today}

\maketitle

\begin{abstract}
On classes of functions defined on $\mathbb R^{2n}$ we introduce
abstract composition laws modelled after the pseudodifferential
product of symbols. We attach to these composition laws modulation
mappings and spaces with useful algebraic and topological
properties.
\end{abstract}

%...............................................................................................
\section*{Introduction}\label{intro}
%.................................................................................................

Very roughly, the basic part of a pseudodifferential calculus consists
in a prescription $\mathfrak{Op}$ to transform suitable functions (symbols) defined on the "phase space" $\Xi=\mathbb R^{2n}$
into operators acting on functions defined on "the configuration space" $\X=\mathbb R^n$.
This procedure produces a non-commutative composition law $\sharp$ on symbols, the Moyal product,
emulating the operator product. The prototype that we have in view is the Kohn-Nirenberg pseudodifferential calculus \cite{KN}
and its symmetric form, the Weyl calculus \cite{Fo}.
Some more general pseudodifferential  calculi have been proposed in the literature. In \cite{MP1} we introduced the magnetic Weyl calculus,
which is a generalization of the usual pseudodifferential theory in Weyl form to the case when a magnetic field
(a closed $2$-form on $\X$) is present (cf. \cite{KO1,MPR2,IMP}).
See also \cite{BB1,BB2,BB5} for an extension to nilpotent Lie groups, building on \cite{Pe}.

Modulation spaces are Banach spaces of functions introduced by H. Feichtinger \cite{F1,F2}.
They evolved especially in connection with Time Frequency Analysis, Gabor Frames and Signal Processing
Theory.  Lately their importance in the theory of
pseudodifferential operators has been discovered  and the interconnection between modulation spaces and
pseudodifferential theory developed considerably. We
cite, without any claim of completeness, \cite{B1,FGL,G1,G2,G3,G4,HRT,S1,S2,T1,T2}; see also references therein.
They can be considered an attractive alternative to the standard theory \cite{Fo}  of H\"ormander-type symbol spaces. In many circumstances
they lead to sharper results, simpler proofs or finer insight.

The point of view that we want to emphasize in the present paper is that behind all these developments one can find an interesting
algebraic structure based on the Moyal product, leading to a version of the modulation spaces.
It comes out in fact that explicit expressions for the composition law $\sharp$ and the representation
$\mathfrak{Op}$ are not essential; some general properties are enough to develop at least the basic aspects of the theory.
So we are going to work with an abstract associative
product $\#$ on $\mathcal{S}(\Xi)$ satisfying some suitable but not very restricted properties. To this product we assign several
canonical transformations (one of them is called the modulation mapping) having good algebraic and topological properties.
A localization procedure (choice of a window) coupled with these transformations leads to a mechanism of inducing spaces
of distributions on $\Xi$ from convenient spaces of distributions on $\Xi\times\Xi$.
In a later paper, representations of the emerging algebras will be studied, as an abstractization of the representation $\mathfrak{Op}$.

A short description of our paper follows.

In the first section we introduce convenient composition
laws $\#$ on the Schwartz space $\mathcal{S}(\Xi)$, generalizing the usual Weyl product $\sharp$. Following \cite{GBV,MP1},
we extend $\#$ to certain classes of distributions by using duality techniques.
We also mention briefly representations in Hilbert and locally convex spaces, mainly to justify one of the axioms we impose on $\#$.
Part of any modulation theory involves switching to functions defined on
"the doubled space" $\Xi\times\Xi$. By a tensor construction, we extend the composition to this setting. We also introduce
a "crossed-product" multiplication, playing an important role in the next chapters.

We come then to the usual strategy of defining modulation mappings and spaces by doubling the number of variables and reducing back by
means of a window.
Topological isomorphisms $\M,\N,\R$ are defined on the Schwartz space $\mathcal{S}(\Xi\times\Xi)$
and then extended by duality to the space of tempered distributions $\mathcal{S}^\prime(\Xi\times\Xi)$.
They are determined by the composition law $\#$ only and they are connected to each other by some simple transformations.
We are going to work with $\M$ mainly (it will be called {\it the} modulation mapping).
Under a new requirement on $\#$, it satisfies a useful
orthogonality condition. It also defines a $^*$-algebra isomorphism from structures built on doubling the initial
composition law $\#$ to structures involving the simple crossed-product multiplication.

Applying $\M$ to elementary tensors $F=f\otimes h$ and freezing the window $h$ (often supposed to be real and idempotent),
we get an efficient tool to
induce spaces, norms and properties from function spaces $\LL$ defined on $\Xi\times\Xi$. We study the basic features of this induction
procedure. Let us stress that some properties of $\LL$ (as being a $^*$-algebra, for instance) become universal:
the induced spaces $\LL^\M_h\equiv\LL^\#_h$ will have the same property for all the good composition laws $\#$ and windows $h$.
To confer to this idea the full technical strengths we shall discuss briefly Moyal algebras,
having \cite{GBV} as a source of inspiration (see also \cite{MP1}).
Invariance under the change of window or of the modulation mapping are also addressed.
We touch very superficially the problem of spectral invariance problem.

We give some examples in the last section. The first one is merely a counter-example: the point-wise multiplication does not fit
in our scheme. We show then that the standard Weyl calculus satisfies the axioms and that the emerging modulation spaces
associated to $\LL=L^{p,q}(\Xi\times\Xi)$
coincide with the traditional ones, based on the Short Time Fourier Transform. The magnetic form of pseudodifferential theory constitutes
a final example, showing that results from \cite{MP4} can be covered by our unified treatment.

We hope to dedicate a future publication to a deeper study of the framework, including representations, spectral invariance,
concrete modulation spaces or more general examples.
It is very plausible that most of our developments also hold for other topological vector spaces than $\S'$,
on general abelian locally compact group $\Xi$ (see \cite{GS}) or even on less structured spaces.

%...............................................................................................
\section{Abstract composition laws}\label{mapsymb}
%.................................................................................................

%...............................................................................................
\subsection{Framework, conventions and technical facts}\label{mapsymbobo}
%.................................................................................................

The starting point is the euclidean space $\X:=\mathbb R^n$. We denote by $\X'$ the dual space of $\X$; the duality is given simply by
$\X\times\X'\ni(x,\xi)\mapsto x\cdot\xi$ (the canonical euclidean scalar product on $\mathbb{R}^n$).
The phase space will be $\Xi:=T^*\X\equiv\X\times\X'$, containing points
$X=(x,\xi)$, $Y=(y,\eta)$, $Z=(z,\zeta)$. It is a symplectic vector space with symplectic form
\begin{equation}\label{simp}
\sigma(X,Y)\equiv\sigma[(x,\xi),(y,\eta)]:=y\cdot \xi-x\cdot\eta.
\end{equation}

On tempered distributions on $\Xi$ we use the symplectic Fourier transformation
$$
(\mathfrak F f)(Y)\equiv\widehat f(Y):=\int_\Xi dZ\,e^{-i\sigma(Y,Z)}f(Z).
$$
With a good choice of the Haar measure $dX$ on $\Xi$, it is unitary in $L^2(\Xi)$ and satisfies $\mathfrak F^2=1$.

We denote by $\mathbb{B}\,(\mathcal{U};\mathcal{V})$ the space of linear continuous operators between the topological vector spaces
$\mathcal{U}$ and $\mathcal{V}$. If $\mathcal{U}=\mathcal{V}$, we set simply
$\mathbb{B}\,(\mathcal{U};\mathcal{U})=:\mathbb{B}\,(\mathcal{U})$.
On such operator spaces we consider usually the topology of uniform convergence on bounded sets.

The Schwartz space $\mathcal{S}(\Xi)$ being a Fr\'{e}chet space, it is barrelled  \cite[p. III.25]{Bou}.
As $\mathcal{S}(\Xi)$ is also reflexive, \cite[p. IV.23]{Bou} implies that its strong dual
$\mathcal{S}'(\Xi)$ is bornologic and barrelled. They are both nuclear, Montel space. It seems rather plausible that such abstract
properties would be enough to develop the theory below in an even more general setting.

Given a Hausdorff Fr\'{e}chet space and its strong dual, the polars of the neighborhoods of 0 in any of the two spaces is
a basis for the bornology of the other and the polars of the bounded sets of any of the spaces is a basis for the neighborhoods
of $0$ in the other space. Recall that the Closed Graph Theorem holds in Fr\'echet spaces.

We are going to use the "real" scalar product $\langle f,g\rangle:=\int_\Xi dX f(X)g(X)$ and its natural extension to a duality
form  (on $\mathcal{S}'(\Xi)\times\mathcal{S}(\Xi)$ or $\mathcal{S}(\Xi)\times\mathcal{S}'(\Xi)$ for example).

Let us recall the canonical isomorphisms
\begin{equation}\label{tens1}
\mathcal{S}(\Xi\times\Xi)\cong \mathcal{S}(\Xi)\widehat\otimes\,\mathcal{S}(\Xi),\ \ \ \mathcal{S}'(\Xi\times\Xi)\cong
\mathcal{S}'(\Xi)\widehat\otimes\,\mathcal{S}'(\Xi).
\end{equation}
The symbol $\widehat\otimes$
stands for either the injective or the projective tensor product (by nuclearity).
In particular, the algebraic tensor product $\mathcal{S}(\Xi)\odot\mathcal{S}(\Xi)$ is dense in $\mathcal{S}(\Xi\times\Xi)$ and
$\mathcal{S}^\prime(\Xi)\odot\mathcal{S}'(\Xi)$ is dense in $\mathcal{S}'(\Xi\times\Xi)$. We also mention the Hilbert space isomorphism
\begin{equation}\label{tens2}
L^2(\Xi\times\Xi)\cong L^2(\Xi)\otimes L^2(\Xi).
\end{equation}

%................................................................................................
\subsection{Symbol composition on phase space}\label{pozution}
%..................................................................................................

On the space $\mathcal{S}(\Xi)$ we shall  consider a bilinear associative composition law denoted by $\#$.
The following assumption will always stand:

\medskip
\noindent
{\bf Hypothesis A.}
\begin{enumerate}
\item
 $\mathcal{S}(\Xi)$ is a $^*$-algebra with the separately continuous composition law
$\#:\mathcal{S}(\Xi)\times \mathcal{S}(\Xi)\rightarrow \mathcal{S}(\Xi)$
and with the involution defined by complex conjugation $f(\cdot)\mapsto f^\#(\cdot):=\overline{f(\cdot)}$.
\item For any $f,g\in \mathcal{S}(\Xi)$ one has
\begin{equation}\label{crichi}
\int_\Xi dX (f\#g)(X)=\int_\Xi dXf(X)g(X).
\end{equation}
\end{enumerate}

\noindent
Corollary of Theorem 34.1 in \cite{Tr} implies the joint continuity of the map $\#$.
Note that the zero composition law $(f,g)\mapsto f\#g:=0$ does not verify (\ref{crichi}), so it is outside the scope of this paper.

Most of the time we will use (\ref{crichi}) in the form given by the next result.

\begin{Proposition}
 Under Hypothesis A, for $f_1,f_2,f_3\in \mathcal{S}(\Xi)$ one has {\rm the cyclicity condition}
\begin{equation}\label{croco}
\<f_1\# f_2,f_3\>=\<f_1,f_2\# f_3\>=\<f_2,f_3\# f_1\>.
\end{equation}
\end{Proposition}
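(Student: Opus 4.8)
The plan is to derive the cyclicity condition (\ref{croco}) directly from Hypothesis A, using only the integral identity (\ref{crichi}), the associativity of $\#$, and the fact that the $\la\cdot,\cdot\ra$ pairing is just integration of a product. First I would observe that for any $f,g\in\mathcal{S}(\Xi)$ one has, tautologically, $\la f,g\ra=\int_\Xi dX\,f(X)g(X)$, so that (\ref{crichi}) can be rewritten as $\la f\#g,\I\!\ra$-type statements once we note that both sides of (\ref{crichi}) make sense because $f\#g\in\mathcal{S}(\Xi)$ (Hypothesis A.1) and $fg\in\mathcal{S}(\Xi)$. The key point is that the product $f(X)g(X)$ appearing on the right-hand side of (\ref{crichi}) is exactly the integrand of $\la f,g\ra$.

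Second, I would apply (\ref{crichi}) with $f$ replaced by $f_1\#f_2$ and $g$ replaced by $f_3$, both of which lie in $\mathcal{S}(\Xi)$. This gives
\begin{equation*}
\int_\Xi dX\,\bigl((f_1\#f_2)\#f_3\bigr)(X)=\int_\Xi dX\,(f_1\#f_2)(X)\,f_3(X)=\la f_1\#f_2,f_3\ra.
\end{equation*}
Symmetrically, applying (\ref{crichi}) with $f$ replaced by $f_1$ and $g$ by $f_2\#f_3$ yields
\begin{equation*}
\int_\Xi dX\,\bigl(f_1\#(f_2\#f_3)\bigr)(X)=\int_\Xi dX\,f_1(X)\,(f_2\#f_3)(X)=\la f_1,f_2\#f_3\ra.
\end{equation*}
Since $\#$ is associative (it is a composition law on a $^*$-algebra, Hypothesis A.1), the two left-hand sides are equal, giving the first equality $\la f_1\#f_2,f_3\ra=\la f_1,f_2\#f_3\ra$. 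The second equality $\la f_1,f_2\#f_3\ra=\la f_2,f_3\#f_1\ra$ then follows by the same argument applied to the cyclic permutation, i.e.\ by writing $\la f_1,f_2\#f_3\ra=\int_\Xi dX\,(f_2\#f_3\#f_1)(X)$ using (\ref{crichi}) once more (with $f=f_2\#f_3$, $g=f_1$, then reassociating), and this last integral equals $\la f_2\#f_3,f_1\ra=\la f_2,f_3\#f_1\ra$ by the first equality again.

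I do not expect any genuine obstacle here: the statement is essentially a bookkeeping consequence of (\ref{crichi}) plus associativity, and the only things to be slightly careful about are (a) that all the Schwartz-space memberships needed to invoke (\ref{crichi}) are guaranteed by Hypothesis A.1, so every integral written is absolutely convergent, and (b) that the pairing $\la\cdot,\cdot\ra$ is the bilinear ("real") one without complex conjugation, so no conjugates enter and the manipulations are purely algebraic. If anything, the mildly delicate point is making sure the reassociations are applied in the right order to land on the cyclic form rather than an anti-cyclic one, but since $\#$ is genuinely associative this is automatic.
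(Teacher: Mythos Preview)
Your proposal is correct and follows essentially the same route as the paper: both proofs rewrite $\la f_1\#f_2,f_3\ra$ and $\la f_1,f_2\#f_3\ra$ as integrals of the triple product $f_1\#f_2\#f_3$ via (\ref{crichi}), invoke associativity to identify them, and then obtain the second equality by a cyclic repetition together with the commutativity of pointwise multiplication. The only cosmetic difference is that the paper runs the chain of equalities in one display, while you split it into two applications of (\ref{crichi}) and then match the left-hand sides.
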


\begin{proof}
Using (\ref{crichi}) and the associativity of $\#$ one has
$$
\begin{aligned}
\<f_1\# f_2,f_3\>&=\int_\Xi dX\big(f_1\# f_2\big)(X)f_3(X)=\int_\Xi dX\big[\big(f_1\# f_2\big)\# f_3\big](X)\\
&=\int_\Xi dX\big[f_1\# \big(f_2\# f_3\big)\big](X)=\int_\Xi dXf_1(X)\big(f_2\# f_3\big)(X)\\
&=\<f_1,f_2\# f_3\>.
\end{aligned}
$$
The second relation follows similarly, using also the commutativity of the ordinary product.
\end{proof}

In such a framework, we can apply a simple strategy \cite{GBV,MP1} to extend the product $\#$ to much larger spaces.
Using Hypothesis A, the linear maps
$$
L:\ \mathcal{S}(\Xi)\ni f\mapsto L_f\in\mathbb{B}\,[\mathcal{S}(\Xi)],\quad L_f(g):=f\#g,
$$
$$
R:\ \mathcal{S}(\Xi)\ni f\mapsto R_f\in\mathbb{B}\,[\mathcal{S}(\Xi)],\quad R_f(g):=g\#f.
$$
are well-defined.

\begin{Proposition}
The linear maps $L,R:\mathcal{S}(\Xi)\rightarrow\mathbb{B}\,[\mathcal{S}(\Xi)]$ are continuous when on $\mathbb{B}\,[\mathcal{S}(\Xi)]$
we consider the topology of uniform convergence on bounded subsets of $\mathcal{S}(\Xi)$.
\end{Proposition}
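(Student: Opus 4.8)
The plan is to deduce everything from the joint continuity of $\#$, which has already been secured in the text: by the Corollary of Theorem 34.1 in \cite{Tr}, the separately continuous bilinear map $\#:\mathcal{S}(\Xi)\times\mathcal{S}(\Xi)\to\mathcal{S}(\Xi)$ is in fact jointly continuous, because $\mathcal{S}(\Xi)$ is a Fr\'echet space. Equivalently, barrelledness of $\mathcal{S}(\Xi)$ makes $\#$ hypocontinuous with respect to the bounded subsets of each factor; this is the precise statement needed. So I would start by recording this fact and by fixing balanced, convex neighborhoods of $0$ to work with.

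Next I would unwind the target topology. On $\mathbb{B}[\mathcal{S}(\Xi)]$ a basis of neighborhoods of $0$ for the topology of uniform convergence on bounded sets is given by the sets $W(B,V):=\{T\in\mathbb{B}[\mathcal{S}(\Xi)]:T(B)\subseteq V\}$, where $B$ runs over the bounded subsets of $\mathcal{S}(\Xi)$ and $V$ over the neighborhoods of $0$ in $\mathcal{S}(\Xi)$; each $W(B,V)$ is absorbing because a continuous operator sends the bounded set $B$ to a bounded set. Since $L$ is linear, its continuity reduces to continuity at $0$, i.e. to showing that for every bounded $B$ and every neighborhood $V$ of $0$ there is a neighborhood $U$ of $0$ with $L(U)\subseteq W(B,V)$, which unpacks to $f\#g\in V$ for all $f\in U$ and all $g\in B$.

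The core computation is short. Given $B$ and $V$, joint continuity of $\#$ at $(0,0)$ furnishes balanced neighborhoods $U_1,U_2$ of $0$ with $U_1\#U_2\subseteq V$. Since $B$ is bounded there is $\lambda>0$ with $B\subseteq\lambda U_2$; set $U:=\lambda^{-1}U_1$. Then for $f\in U$ and $g\in B$, bilinearity gives $f\#g=(\lambda f)\#(\lambda^{-1}g)$ with $\lambda f\in U_1$ and $\lambda^{-1}g\in U_2$, hence $f\#g\in U_1\#U_2\subseteq V$, that is $L(U)\subseteq W(B,V)$. This proves $L$ continuous. The argument for $R$ is identical with the two factors of $\#$ interchanged (one uses $R_f(g)=g\#f$ and the bound $B\#U\subseteq V$ in place of $U\#B\subseteq V$).

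I do not expect a genuine obstacle here. The only point that must not be skipped is the passage from \emph{separate} to \emph{joint} continuity of $\#$ (equivalently, to hypocontinuity on bounded sets), which rests on $\mathcal{S}(\Xi)$ being barrelled/Fr\'echet and is exactly the content of the cited corollary already invoked above; once joint continuity is in hand, absorbing the bounded set $B$ into a dilate of $U_2$ and moving the scalar by bilinearity are purely formal.
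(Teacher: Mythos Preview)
Your proof is correct and follows essentially the same route as the paper: both arguments use the joint continuity of $\#$ to find neighborhoods $U_1,U_2$ with $U_1\#U_2\subseteq V$, then absorb the bounded set $B$ into a dilate of one of them and use bilinearity to shift the scalar across, thereby exhibiting a neighborhood of $0$ in $\mathcal{S}(\Xi)$ mapped by $L$ (resp.\ $R$) into the given basic neighborhood of $0$ in $\mathbb{B}[\mathcal{S}(\Xi)]$. The only cosmetic difference is that the paper treats $L$ and $R$ in one stroke by introducing two scalars $\lambda_1,\lambda_2$, while you handle $L$ first and note that $R$ is symmetric.
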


\begin{proof}
Let us consider a neighborhood of $0\in\mathbb{B}\,[\mathcal{S}(\Xi)]$ of the form
$$
V_{B,U}:=\left\{\,T\in\mathbb{B}\,[\mathcal{S}(\Xi)]\ \mid\ Tf\in U,\,\forall f\in B\right\},
$$
for some $B\subset\mathcal{S}(\Xi)$ bounded and $U$ a neighborhood of $0\in\mathcal{S}(\Xi)$.
Due to the joint continuity of the composition $\#$, we deduce the existence of two neighborhoods $U_1,U_2$ of $0\in\mathcal{S}(\Xi)$
such that $U_1\#U_2\subset U$, i.e.
$$
\forall g_1\in U_1,\ \forall g_2\in U_2,\ \ L_{g_1}(g_2)=R_{g_2}(g_1)\in U.
$$
But $B$ being bounded, there exist two numbers $\lambda_1>0$ and $\lambda_2>0$ such that
$\lambda_1B\subset U_1$ and $\lambda_2B\subset U_2$. Thus
$$
\begin{array}{rcl}
f_1\in U_1,\ f_2\in B&\Rightarrow& \big(\lambda_2L_{f_1}\big)(f_2)=\lambda_2\big(f_1\#f_2\big)=
f_1\#\big(\lambda_2f_2\big)\in U,\\
f_1\in B,\ f_2\in U_2&\Rightarrow&\big(\lambda_1R_{f_2}\big)(f_1)=\lambda_1\big(f_1\#f_2\big)=
\big(\lambda_1f_1\big)\#f_2\in U.
\end{array}
$$
Therefore $\lambda_2U_1\subset L^{-1}\big(V_{B,U}\big)$ and $\lambda_1U_2\subset R^{-1}\big(V_{B,U}\big)$, just noticing that
$\lambda L_f=L_{\lambda f}$ and $\lambda R_f=R_{\lambda f}$. This shows that $L$ and $R$ are continuous.
\end{proof}

For each $f\in\mathcal{S}(\Xi)$ the maps $L_f,R_f:\S(\Xi)\rightarrow\S(\Xi)$ have associated transposed maps $L_f^{\,t}$ and $R_f^{\ t}$
that are in $\mathbb{B}\,[\mathcal{S}'(\Xi)]$ for the weak topology on $\mathcal{S}'(\Xi)$,
but also for the strong topology on the dual (Corollary of Proposition 19.5 in \cite{Tr}).
By the cyclicity property \eqref{croco}, $L^{\,t}_f$ can be regarded as an extension
of $R_f$ and $R^{\ t}_f$ as an extension of $L_f$. So, for $f\in\S(\Xi)$, let us define the linear maps
$$
R^\dagger:\mathcal{S}(\Xi)\rightarrow\mathbb{B}\,[\mathcal{S}'(\Xi)]:\quad R^\dagger(f):=L^{\,t}_f\,,
$$
$$
L^\dagger:\mathcal{S}(\Xi)\rightarrow\mathbb{B}\,[\mathcal{S}'(\Xi)]:\quad L^\dagger(f):=R^{\ t}_f\,.
$$

\begin{Proposition}\label{cont-dagger}
The linear applications $L^\dagger$ and $R^\dagger$ are continuous if on $\S'(\Xi)$ one considers the strong dual topology.
\end{Proposition}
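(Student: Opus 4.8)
The plan is to deduce the continuity of $L^\dagger$ and $R^\dagger$ from the continuity of $L$ and $R$ established in the previous proposition, using the duality theory of Fréchet–Montel spaces. First I would recall that $\mathcal{S}(\Xi)$ is reflexive (in fact Montel), so that the transposition operation $T\mapsto T^{\,t}$ is itself a continuous map from $\mathbb{B}[\mathcal{S}(\Xi)]$, with the topology of uniform convergence on bounded sets, into $\mathbb{B}[\mathcal{S}'(\Xi)]$, again with the topology of uniform convergence on bounded sets (on a Montel space, the bounded sets of the strong dual are exactly the polars of neighborhoods of $0$, and the strong bidual is canonically $\mathcal{S}(\Xi)$, so this bicontinuity is standard — see e.g. the corollary of Proposition 19.5 in \cite{Tr} already invoked above). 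Since $R^\dagger = (\,\cdot\,)^{t}\circ L$ and $L^\dagger = (\,\cdot\,)^{t}\circ R$ as compositions of continuous maps, continuity follows.

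Concretely, I would argue directly with polar neighborhoods to keep the proof self-contained. A basic neighborhood of $0$ in $\mathbb{B}[\mathcal{S}'(\Xi)]$ (uniform convergence on bounded sets) has the form $W_{\B,\V}:=\{S\in\mathbb{B}[\mathcal{S}'(\Xi)]\mid S\,u\in\V\ \forall u\in\B\}$, where $\B\subset\mathcal{S}'(\Xi)$ is strongly bounded and $\V$ is a neighborhood of $0$ in $\mathcal{S}'(\Xi)$. By the duality facts recalled in Subsection \ref{mapsymbobo}, we may take $\V=A^\circ$ for some bounded $A\subset\mathcal{S}(\Xi)$ and, since $\B$ is equicontinuous (strongly bounded subsets of the dual of a barrelled space are equicontinuous), $\B\subset U^\circ$ for some neighborhood $U$ of $0$ in $\mathcal{S}(\Xi)$. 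Unwinding the definition $R^\dagger(f)=L_f^{\,t}$, the condition $L_f^{\,t}\,u\in A^\circ$ for all $u\in U^\circ$ reads $|\la L_f^{\,t}u, a\ra| = |\la u, L_f a\ra| = |\la u, f\#a\ra|\le 1$ for all $u\in U^\circ$, $a\in A$, which is equivalent to $f\# A\subset U^{\circ\circ}$, i.e. $L_f(A)$ lying in the closed absolutely convex hull of $U$. Up to shrinking $U$ by the bipolar theorem, this is precisely the statement that $L_f\in V_{A,U}$ for the neighborhood $V_{A,U}$ of $0$ in $\mathbb{B}[\mathcal{S}(\Xi)]$ from the proof of the previous proposition; hence $R^\dagger$ pulls back $W_{\B,\V}$ to a neighborhood of $0$, by continuity of $L$. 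The argument for $L^\dagger$ is identical with $R$ in place of $L$ and $R_f$ in place of $L_f$.

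The step that requires the most care is the identification of the basic neighborhoods and bounded sets on both sides via polarity: one must use that $\mathcal{S}(\Xi)$ is barrelled (so that strongly bounded subsets of $\mathcal{S}'(\Xi)$ are equicontinuous, giving $\B\subset U^\circ$) and reflexive/Montel (so that the topology of $\mathcal{S}'(\Xi)$ has a basis of polars $A^\circ$ of bounded sets $A\subset\mathcal{S}(\Xi)$, and conversely), together with the bipolar theorem to pass between $U$ and $U^{\circ\circ}$. All of these facts are collected in Subsection \ref{mapsymbobo}. Once the dictionary between the two pictures is in place, the continuity of $L^\dagger$ and $R^\dagger$ is a formal consequence of that of $L$ and $R$, so no genuinely new estimate is needed; the only real obstacle is bookkeeping of polars and making sure the equicontinuity of $\B$ is invoked where needed.
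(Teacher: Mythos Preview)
Your proposal is correct, and your direct argument with polar neighborhoods is essentially the paper's own proof: the paper also takes a basic neighborhood $V_{\tilde B,\tilde U}$ in $\mathbb{B}[\mathcal{S}'(\Xi)]$, writes $\tilde U$ as the polar of a bounded set $B\subset\mathcal{S}(\Xi)$, encloses $\tilde B$ in the polar $W^\circ$ of a neighborhood $W$, and then finds a neighborhood of $0$ in $\mathcal{S}(\Xi)$ mapping into $V_{\tilde B,\tilde U}$. The only cosmetic difference is that the paper invokes the joint continuity of $\#$ directly (producing $U_1,U_2$ with $U_1\# U_2\subset W$) rather than quoting the previously proved continuity of $L$; since the latter was itself obtained from joint continuity, the two are equivalent.

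Your opening abstract argument --- factoring $R^\dagger$ and $L^\dagger$ as transposition composed with $L$ and $R$, and observing that $T\mapsto T^{\,t}$ is continuous $\mathbb{B}[\mathcal{S}(\Xi)]\to\mathbb{B}[\mathcal{S}'(\Xi)]$ for the bounded-convergence topologies on a Montel space --- is a clean conceptual packaging that the paper does not make explicit; it buys you the result in one line once the previous proposition is in hand, while the paper's version is self-contained but repeats the same polar bookkeeping. One small caveat: the corollary of Proposition~19.5 in \cite{Tr} cited in the paper asserts only that each individual transpose is strongly continuous, not the continuity of the transposition map itself, so if you keep the abstract argument you should justify that step separately (your polar computation in the second paragraph does exactly this).
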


\begin{proof}
We shall only treat $R^\dagger$. Consider a neighborhood of $0\in\mathbb{B}\,[\mathcal{S}'(\Xi)]$ of the form
$$
V_{\tilde{B},\tilde{U}}:=\left\{T\in\mathbb{B}\,[\mathcal{S}'(\Xi)]\ \mid\ Tf\in\tilde{U},\,\forall f\in\tilde{B}\right\}
$$
with some $\tilde{B}\subset\mathcal{S}'(\Xi)$ bounded and $\tilde{U}\subset\mathcal{S}'(\Xi)$ a neighborhood of $0\in\mathcal{S}'(\Xi)$.
We can take $\tilde{U}$ of the form
$$
\tilde{U}_{B,\epsilon}:=\left\{f\in\mathcal S'(\Xi)\ \mid\ |\<f,h\>|<\epsilon,\ \forall\,h\in B\right\},
$$
for $B\subset\mathcal S(\Xi)$ bounded and $\epsilon>0$.

One has $\left<R^\dagger(g)f,h\right>=\left<L^{\,t}_g f,h\right>=\left<f,g\# h\right>$, so the relation
$R^\dagger(g)\in V_{\tilde{B},\tilde{U}_{B,\epsilon}}$ means
$$
\left| \left<f,g\# h\right>\right|<\epsilon,\ \ \forall\,h\in B\subset\mathcal S(\Xi),\ \forall f\in\tilde{B}\subset\mathcal{S}'(\Xi).
$$
Since $\mathcal S(\Xi)$ is a Hausdorff Fr\'{e}chet space, the bounded set
$\tilde{B}\subset\mathcal{S}'(\Xi)$ is contained in the polar $W^\circ$ of a neighborhood $W$ of $0\in\mathcal S(\Xi)$, i.e.:
$$
f\in\tilde{B}\ \Rightarrow\left| \left<f,k\right>\right|<1,\ \forall\,k\in W.
$$
Due to the joint continuity of the composition $\#$, there exist two neighborhoods $U_1$ and $U_2$ of $0\in\mathcal S(\Xi)$ such that
$U_1\#\,U_2\subset W$. Thus
$$
f\in\tilde{B},\,g_1\in U_1,\,h\in U_2\ \Rightarrow\ \left| \left<f,\epsilon\big(g_1\# h\big)\right>\right|<\epsilon.
$$
But since $B\subset\mathcal S(\Xi)$ is bounded, there exists a positive constant $\mu>0$ such that $\mu B\subset U_2$.
Noticing that $\epsilon\big[g_1\#(\mu h)\big]=(\epsilon \mu g_1)\# h$, we conclude that for any $g\in\epsilon \mu U_1$ we have
$R^\dagger(g)\in V_{\tilde{B},\tilde{U}_{B,\epsilon}}$, so $R^\dagger$ is continuous.
\end{proof}

To conclude the discussion, it is natural now to define the bilinear composition laws
$$
\mathcal S(\Xi) \times\mathcal S'(\Xi)\ni(g,f)\mapsto g\#'\,f:=L^\dagger(g)f\in\mathcal S'(\Xi),
$$
$$
\mathcal S'(\Xi) \times\mathcal S(\Xi)\ni(f,g)\mapsto f\ '\# g:=R^\dagger(g)f\in\mathcal S'(\Xi).
$$
The cyclicity property (\ref{croco}) implies that both $\#'$ and $'\#$ are extensions of the application
$\#:\mathcal S(\Xi)\times\mathcal S(\Xi)\rightarrow\mathcal S(\Xi)$. Therefore we simplify notations writing $\#$
instead of $\#'$ or $'\#$. In fact, for $f\in\S'(\Xi),\,g,h\in\S(\Xi)$, one has
\begin{equation}\label{maiclar}
\<f\#g,h\>:=\<f,g\#h\>,\ \quad\<g\#f,h\>:=\<f,h\#g\>,
\end{equation}
where $\<\cdot,\cdot\>$ is now interpreted as the duality between $\S'(\Xi)$ and $\S(\Xi)$.

By Proposition \ref{cont-dagger} and by some trivial manipulations of the definitions one gets

\begin{Corollary}\label{ciuhap}
The two mappings $\mathcal S(\Xi) \times\mathcal S'(\Xi)\overset{\#}{\mapsto} \mathcal S'(\Xi)$
and $\mathcal S'(\Xi) \times\mathcal S(\Xi)\overset{\#}{\mapsto} \mathcal S'(\Xi)$ are bilinear and separately continuous when
on $\S'(\Xi)$ we consider the strong topology. The same is true for the weak topology.
\end{Corollary}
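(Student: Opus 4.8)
The plan is to deduce Corollary \ref{ciuhap} as a formal consequence of Proposition \ref{cont-dagger} together with the defining identities \eqref{maiclar}, without re-doing any of the $\epsilon$--$\delta$ work. The key observation is that the bilinear maps in question are, up to identifications already made, nothing but the evaluation maps of the continuous linear maps $L^\dagger$ and $R^\dagger$.

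First I would treat the map $\S(\Xi)\times\S'(\Xi)\to\S'(\Xi)$, $(g,f)\mapsto g\#f$. Fixing $g\in\S(\Xi)$, this is the map $f\mapsto L^\dagger(g)f=R_g^{\ t}f$, which lies in $\mathbb B[\S'(\Xi)]$ by construction (the transpose of a continuous map on the reflexive Fréchet space $\S(\Xi)$ is continuous on $\S'(\Xi)$, strongly as well as weakly, by the Corollary of Proposition 19.5 in \cite{Tr} quoted above); hence continuity in $f$ for both the strong and the weak topology on $\S'(\Xi)$. Fixing instead $f\in\S'(\Xi)$, the map $g\mapsto g\#f=L^\dagger(g)f$ is the composition of $L^\dagger:\S(\Xi)\to\mathbb B[\S'(\Xi)]$, which is continuous for the strong dual topology by Proposition \ref{cont-dagger}, with the evaluation-at-$f$ map $\mathrm{ev}_f:\mathbb B[\S'(\Xi)]\to\S'(\Xi)$, $T\mapsto Tf$. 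Since $\mathbb B[\S'(\Xi)]$ carries the topology of uniform convergence on bounded sets and $\{f\}$ is bounded, $\mathrm{ev}_f$ is continuous into $\S'(\Xi)$ with its strong topology; composing, $g\mapsto g\#f$ is strongly continuous, and a fortiori weakly continuous. This gives separate continuity of the first map for both topologies.

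Next I would handle $\S'(\Xi)\times\S(\Xi)\to\S'(\Xi)$, $(f,g)\mapsto f\#g$, by the symmetric argument: for fixed $g$ it is $f\mapsto R^\dagger(g)f=L_g^{\,t}f\in\mathbb B[\S'(\Xi)]$, continuous for strong and weak topologies; for fixed $f$ it is $g\mapsto \mathrm{ev}_f\circ R^\dagger(g)$, continuous because $R^\dagger$ is strongly continuous by Proposition \ref{cont-dagger} and $\mathrm{ev}_f$ is continuous into the strong dual. Bilinearity in all cases is immediate from the definitions of $L^\dagger$, $R^\dagger$ and of the duality form, since transposition is linear and the pairing $\<\cdot,\cdot\>$ is bilinear; alternatively one reads it off directly from \eqref{maiclar}. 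The remark that "the same is true for the weak topology" costs nothing extra, since the strong topology is finer, so strong continuity implies weak continuity, while continuity in the separated variable $f$ was noted to hold weakly from the start.

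I do not expect a genuine obstacle here: the whole content is bookkeeping, matching the abstract statement to the already-proven Proposition \ref{cont-dagger}. The one point deserving a line of care is the continuity of $\mathrm{ev}_f$ on $\mathbb B[\S'(\Xi)]$ into the \emph{strong} dual — this is exactly why the topology of uniform convergence on bounded sets (rather than merely the weak operator topology) was imposed on the operator spaces earlier, and it is worth stating that singletons are bounded so that the corresponding basic neighbourhoods $V_{\{f\},\tilde U}$ do the job. Everything else is a direct quotation of \eqref{maiclar}, Proposition \ref{cont-dagger}, and the standard transpose continuity result cited from \cite{Tr}.
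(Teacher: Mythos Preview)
Your argument is correct and is exactly the detailed unpacking the paper leaves implicit: the paper's proof is the single line ``By Proposition \ref{cont-dagger} and by some trivial manipulations of the definitions'', and you have supplied precisely those manipulations (evaluation at a fixed $f$ composed with the continuous $L^\dagger$ or $R^\dagger$, together with transpose continuity for the other variable). There is no genuine difference in approach.
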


It is seen immediately from (\ref{croco}) by approximation, that for $f_1,f_2\in\S(\Xi)$ and $g\in\S'(\Xi)$ one has the associativity relation
$(f_1\# g)\#f_2=f_1\#(g\# f_2)$ (identity in $\S'(\Xi)$). Denoting by $I\in\S'(\Xi)$ the constant function $I(X):=1,\ \forall X\in\Xi$, we get
easily from (\ref{crichi}) that $I\#f=f=f\#I,\ \forall f\in\S(\Xi)$.

\medskip
We recall the family $(\e_X)_{X\in\Xi}$ of functions that will play an important role in the sequel
\begin{equation}
\e_X(Z):=\exp\{i\sigma(X,Z)\},\ \ Z\in\Xi.
\end{equation}
In fact we have $e_X\ =\ \mathfrak F\delta_X$, with $\delta_X$ the point Dirac measure of mass $1$ concentrated in $X\in\Xi$.
Notice the relations
$$
\<\e_{-X}\#g,h\>=[\mathfrak F(g\# h)](X)\ \ {\rm and}\ \ \<g\#\,\e_{-X},h\>=[\mathfrak F(h\# g)](X)
$$
valid for $X\in\Xi$ and $g,h\in\mathcal{S}(\Xi)$. The next result is basically Plancherel's formula.

\begin{Lemma}\label{basica}
One has in weak sense
\begin{equation}\label{pleasca}
\int_\Xi dZ\,|\e_{-Z}\rangle\langle\e_{Z}|=1,\ \quad\int_\Xi dZ\,|\delta_Z\rangle\langle\delta_{Z}|=1.
\end{equation}
\end{Lemma}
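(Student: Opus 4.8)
The plan is to read \eqref{pleasca} in the weak operator sense. Writing out the rank-one operators against test functions, the first identity asserts that
\[
\int_\Xi dZ\,\langle\e_Z,f\rangle\,\langle\e_{-Z},g\rangle=\langle f,g\rangle,\qquad f,g\in\mathcal S(\Xi),
\]
while the second merely asserts $\int_\Xi dZ\,f(Z)g(Z)=\langle f,g\rangle$, which is nothing but the definition of the real pairing and needs no argument. So I would concentrate on the first relation, the second being either trivial or, if one prefers, deducible from the first by conjugating with $\mathfrak F$ and using $\mathfrak F\e_X=\delta_X$.

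First I would record the bookkeeping identities $\langle\e_Z,f\rangle=\int_\Xi dY\,e^{i\sigma(Z,Y)}f(Y)=(\mathfrak F f)(-Z)$ and, likewise, $\langle\e_{-Z},g\rangle=(\mathfrak F g)(Z)$. Since $f,g\in\mathcal S(\Xi)$ and $\mathfrak F$ preserves $\mathcal S(\Xi)$, both factors are Schwartz functions of $Z$; hence the integral converges absolutely and the operator-valued integral $\int_\Xi dZ\,|\e_{-Z}\rangle\langle\e_Z|$ is well defined weakly as a map $\mathcal S(\Xi)\to\mathcal S'(\Xi)$. The core step is then Fourier inversion: substituting the definition of $\mathfrak F$ and interchanging the order of integration (legitimate by the rapid decay of $f$ and $g$), the left-hand side becomes
\[
\int_\Xi dY\int_\Xi dW\,f(Y)\,g(W)\int_\Xi dZ\,e^{i\sigma(Z,Y-W)},
\]
and the inner integral is the Dirac mass at $Y=W$ — this is precisely the normalization of the Haar measure $dZ$ that forces $\mathfrak F^2=1$. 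Carrying out the $W$-integration yields $\int_\Xi dY\,f(Y)g(Y)=\langle f,g\rangle$.

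Alternatively one can bypass the triple integral: using $\e_X=\mathfrak F\delta_X$, the elementary resolution $\int_\Xi dZ\,\varphi(Z)\,\delta_Z=\varphi$ for $\varphi\in\mathcal S(\Xi)$, and the change of variables $Z\mapsto-Z$, one rewrites $\int_\Xi dZ\,|\e_{-Z}\rangle\langle\e_Z|\,f=\int_\Xi dZ\,(\mathfrak F f)(Z)\,\e_Z=\mathfrak F\big(\mathfrak F f\big)=f$, the interchange of $\mathfrak F$ with the integral being licit by continuity of $\mathfrak F$. Either way there is no genuine obstacle — the lemma is, as the text already indicates, a repackaging of Plancherel/Fourier inversion. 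The only points demanding a line of care are the precise weak-sense interpretation of the vector-valued integrals (handled by the Schwartz property noted above) and keeping the sign conventions for $\sigma$ and $\mathfrak F$ consistent, so that the reflections $Z\mapsto-Z$ cancel as they should.
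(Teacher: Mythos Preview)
Your proposal is correct and follows essentially the same route as the paper: both reduce the first identity to the fact that $\langle\e_{\pm Z},f\rangle=(\mathfrak F f)(\mp Z)$ and then invoke the unitarity/involutivity of $\mathfrak F$ (the paper phrases it as Plancherel, you as $\mathfrak F^2=1$), while the second identity is left as the tautology it is. The only cosmetic difference is that the paper sandwiches with $f$ on the left and $g$ on the right and passes through $\overline{(\mathfrak F\overline g)}$, whereas you act on $f$ first; by symmetry of the real pairing and the change $Z\mapsto-Z$ these are the same computation.
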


\begin{proof}
We compute for $f,g\in L^2(\Xi)$
$$
\begin{aligned}
&\int_\Xi dZ\,\langle f,\e_{-Z}\rangle\langle\e_{Z},g\rangle=\int_\Xi dZ\,(\mathfrak F f)(Z)\,(\mathfrak F g)(-Z)\\
&=\int_\Xi dZ\,(\mathfrak F f)(Z)\,\overline{(\mathfrak F \overline g)(Z)}=\int_\Xi dZ\,f(Z)\,g(Z)=\langle f,g\rangle.
\end{aligned}
$$
\end{proof}

%................................................................................................
\subsection{Representations and a new assumption}\label{pozation}
%..................................................................................................

We come now to a new assumption (cf. also \cite{T3,T4}) that will be needed in most situations.

\medskip
\noindent
{\bf Hypothesis B.} One has $\mathcal{S}(\Xi)\#\,\mathcal{S}'(\Xi)
\#\mathcal{S}(\Xi)\subset\mathcal{S}(\Xi)$.

\medskip
This assumption is not fulfilled for the point-wise product $(f_1\#f_2)(X):=f_1(X)f_2(X)$. However, it is verified for
many "Weyl-type" products.

\begin{Remark}\label{totusi}
Under Hypothesis B, for every $f_1,f_2\in\S(\Xi)$ and $g_1,g_2\in\S'(\Xi)$ we get by approximation
\begin{equation}\label{chavez}
\<g_1,f_1\# g_2\# f_2\>=\<f_2\# g_1\#f_1,g_2\>.
\end{equation}
Sometimes we shall denote this common value by $\<g_1\#f_1,g_2\# f_2\>$ of by $\<f_2\# g_1,f_1\# g_2\>$.
Using (\ref{chavez}), it follows easily that the trilinear mapping
$$
\S(\Xi)\times\S'(\Xi)\times\S(\Xi)\ni(f_1,g,f_2)\mapsto f_1\# g\# f_2\in\S'(\Xi)
$$
is separately continuous for either of the two interesting topologies on $\S'(\Xi)$.
\end{Remark}

Under Hypothesis B one can also define
\begin{equation}\label{dreptul}
\S'(\Xi)\times\S(\Xi)\times\S'(\Xi)\ni(g_1,f,g_2)\mapsto g_1\# f\# g_2\in\S'(\Xi)
\end{equation}
simply by setting
\begin{equation}\label{strambul}
\<g_1\# f\#g_2,h\>:=\<g_1,f\# g_2\# h\>,\ \quad\forall\,h\in\S(\Xi).
\end{equation}
Continuity properties are not difficult to prove.

To see why we expect Hypothesis B to hold, we mention very briefly representations. They will be treated systematically elsewhere,
but one might have them in mind for insight; actually very often they predate and motivate the algebraic structure defined
by a composition law $\#$.
We assume that we are given a triple of spaces $\mathcal G\hookrightarrow\H\hookrightarrow\G'$
and a representation $\mathfrak{Op}$ of the $^*$-algebra $(\mathcal{S}(\Xi),\#,^\#)$ compatible with this triple.
In detail, this means the following:

\medskip
\noindent
{\bf Hypothesis X.}
\begin{enumerate}
\item
$\H$ is a complex, separable Hilbert space with scalar product $(\cdot,\cdot)$.
\item
$\G$ is a Fr\'echet space, continuously and densely embedded in $\H$.
\item
$\G'$ is the topological dual of $\G$, with the usual strong topology (the topology of uniform convergence on bounded subsets
of $\G$); then it contains naturally $\H$ continuously and densely.
\item
$\mathfrak{Op}:\mathcal{S}(\Xi)\rightarrow\mathbb B(\G',\G)$ is a topological isomorphism and a
$^*$-morphism:
$$
\mathfrak{Op}(f\# g)=\mathfrak{Op}(f)\mathfrak{Op}(g),\ \ \ \ \ \mathfrak{Op}(\overline f)=\mathfrak{Op}(f)^*,
\ \ \ \ \ \forall f,g\in\mathcal{S}(\Xi).
$$
\item
$\mathfrak{Op}$ extends to a topological isomorphism $:\mathcal{S}'(\Xi)\rightarrow\mathbb B(\G,\G')$.
\end{enumerate}

Then Hypothesis X implies Hypothesis B, since $\mathbb B(\G,\G')\mathbb B(\G',\G)\mathbb B(\G,\G')\subset\mathbb B(\G,\G')$.

%................................................................................................
\subsection{Composition laws on the double phase-space}\label{pozition}
%..................................................................................................

Let us now raise $\#$ to the tensor product $\mathcal{S}(\Xi)\widehat\otimes\,\mathcal{S}(\Xi)\cong\mathcal{S}(\Xi\times\Xi)$
and introduce the composition law
$\square:\mathcal{S}(\Xi\times\Xi)\times \mathcal{S}(\Xi\times\Xi)\rightarrow
\mathcal{S}(\Xi\times\Xi)$,
uniquely defined by
\begin{equation}\label{dublu}
(f\otimes h)\square(g\otimes k):=(f\# g)\otimes (k\# h);
\end{equation}
notice the reversed order at the level of the second factor.
We also use the involution $^\square$ on $\mathcal{S}(\Xi\times\Xi)$ given by complex conjugation
\begin{equation}\label{jugation}
F^\square(X,Y):=\overline{F(X,Y)},\ \ \ \ \ \forall\,(X,Y)\in\Xi\times\Xi.
\end{equation}

Since $\#$ is supposed to satisfy Hypothesis A then (\ref{dublu}) will verify Hypothesis A too,
for $\Xi$ replaced by $\Xi\times\Xi$ and $\<\cdot,\cdot\>$ replaced by the duality $\<\!\<\cdot,\cdot\>\!\>$ given by
\begin{equation}\label{mutu}
\<\!\<F,G\>\!\>:=\int_\Xi\int_\Xi dX dY F(X,Y)G(X,Y).
\end{equation}

Independently on any previously defined product $\#$, on functions $:\Xi\times\Xi\rightarrow\mathbb C$ we also use
{\it the crossed product composition}
\begin{equation}\label{crosu}
(F\diamond G)(X,Y):=\int_\Xi dZ\,F(X,Z)\,G(X-Z,Y-Z)
\end{equation}
and the involution $F^\diamond(X,Y):=\overline{F(X-Y,-Y)}$.
The "crossed product" feature is seen if we consider functions $\Xi\rightarrow\mathcal{S}(\Xi)$ using notations as
$[F(Z)](X):=F(X,Z)$ and write
(\ref{crosu}) as
$$
(F\diamond G)(Y):=\int_\Xi dZ\,F(Z)\,\mathcal T_Z\left[G(Y-Z)\right]
$$
and the involution as $F^\diamond(Y)=\overline{\mathcal T_Y[F(-Y)]}$, with the action of $\Xi$ on itself given by $\mathcal T_Z(X):=X+Z$,
transferred to functions by
$\mathcal T_Z(g):=g\circ\mathcal T_{-Z}$. We refer to \cite{W} for general information on crossed product algebras.

It is well-known and follows from straightforward arguments that  $\mathcal{S}(\Xi\times\Xi)$ is a
$^*$-algebra with the structure indicated above
and that $\diamond$ is a separately continuous map on $\mathcal{S}(\Xi\times\Xi)\times\mathcal{S}(\Xi\times\Xi)$.
Let us consider the duality
$$
\<\!\<\cdot,\cdot\>\!\>':\mathcal{S}(\Xi\times\Xi)\times\mathcal{S}(\Xi\times\Xi)\rightarrow\C,\ \ \ \ \
\<\!\<F,G\>\!\>':=\int_\Xi\int_\Xi dXdYF(X,X-Y)\,G(Y,Y-X).
$$
One checks easily that
$$
\<\!\<F\diamond G,H\>\!\>'=\<\!\<F,G\diamond H\>\!\>',\ \ \ \ \ \forall\,F,G,H\in\mathcal{S}(\Xi\times\Xi),
$$
and that Hypothesis A is satisfied with respect to the above explicit form of the duality.

For completeness we recall the kernel multiplication
$$
(K\tilde\diamond L)(X,Y):=\int_\Xi dZ\,K(X,Z)\,L(Z,Y)
$$
and the involution $K^{\tilde \diamond}(X,Y):=\overline{K(Y,X)}$, that transform into the above "crossed product" operations by
the change of variables $(X,Y)\mapsto(X,X-Y)$.

The same extensions by duality as those given in subsection \ref{pozution} also work for the composition laws $\square,\,\diamond$
and $\tilde\diamond$ and they will be used below.

%...........................................................................................................
\section{Modulation mappings}\label{modmap}
%...........................................................................................................

%...........................................................................................................
\subsection{The canonical mappings}
%...........................................................................................................

Assuming Hypothesis A, we start with $\mathbf R:\mathcal{S}(\Xi\times\Xi)\rightarrow\mathcal{S}'(\Xi\times\Xi)$, uniquely defined by
\begin{equation}\label{inceputul}
\<\!\<\R(f_1\otimes f_2),g_1\otimes g_2\>\!\>=\<g_1\#f_1,g_2\#f_2\>,
\ \ \ \ \ \forall\,f_1,f_2,g_1,g_2\in\mathcal{S}(\Xi)\,.
\end{equation}
This definition is justified by the identifications (\ref{tens1}), the universal properties of the topological tensor products
and the fact that the duality $\<\cdot,\cdot\>$ and of the product $\#$ are bilinear and continuous.
To arrive to a convenient setting, we are also going to need Hypothesis B.

\begin{Proposition}\label{def-R}
Under Hypothesis A and B the above defined map $\mathbf R$ is in fact a linear continuous map of the Schwartz space $\mathcal{S}(\Xi\times\Xi)$
into itself that extends uniquely to a linear continuous map of the space of temperate distributions $\mathcal{S}'(\Xi\times\Xi)$
into itself.
\end{Proposition}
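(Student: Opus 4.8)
The plan is to understand the map $\R$ explicitly on elementary tensors by computing the right-hand side of \eqref{inceputul} via the representation formulas developed in the previous subsection, and then to recognize the result as a Schwartz function of $(X,Y)$ depending continuously on $f_1\otimes f_2$.

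First I would unravel \eqref{inceputul}. The identity $\<g_1\#f_1,g_2\#f_2\>$ is, under Hypothesis B, the common value described in Remark~\ref{totusi}, so it equals $\<g_1,f_1\# g_2\# f_2\>$ where now $f_1\# g_2\# f_2\in\S(\Xi)$ lies in the Schwartz space because of Hypothesis~B (with $g_2$ temporarily in $\S(\Xi)\subset\S'(\Xi)$). The strategy is then to insert the resolution of the identity from Lemma~\ref{basica}: writing $g_2=\int_\Xi dZ\,\langle\e_Z,g_2\rangle\,\e_{-Z}$ in the weak sense, and using that $f_1\#\e_{-Z}\#f_2$ makes sense pointwise, one obtains an integral formula
$$
[\R(f_1\otimes f_2)](X,Y)=\big(f_1\#\e_{-Y}\#f_2\big)(X)
$$
up to normalization of the Fourier transform, after matching $\<g_1,\cdot\>$ and $\<\e_Z,g_2\>$ against the duality $\<\!\<\cdot,\cdot\>\!\>$. (I expect the precise identification to read: the kernel of $\R(f_1\otimes f_2)$ is $X,Y\mapsto (f_1\#\e_{-Y}\#f_2)(X)$, or a symplectic-Fourier-transformed variant thereof; the bookkeeping of which variable carries the $\mathfrak F$ is the fiddly part.)

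Once such an explicit formula is in hand, the claim that $\R$ maps $\S(\Xi\times\Xi)$ into itself reduces to showing that $(X,Y)\mapsto(f_1\#\e_{-Y}\#f_2)(X)$ is Schwartz jointly in $(X,Y)$, with seminorms controlled by those of $f_1,f_2$. Here I would invoke the joint continuity of the trilinear map $(f_1,g,f_2)\mapsto f_1\# g\# f_2$ into $\S(\Xi)$ from Hypothesis~B together with the smooth, polynomially controlled dependence $Y\mapsto\e_{-Y}$ as a map into $\S'(\Xi)$ — differentiating $\e_{-Y}$ in $Y$ brings down factors linear in the phase-space variable, which are absorbed by the Schwartz decay, and the $\#$-product with fixed Schwartz functions $f_1,f_2$ restores rapid decay. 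By density of $\S(\Xi)\odot\S(\Xi)$ in $\S(\Xi\times\Xi)$ and continuity, this defines $\R$ on all of $\S(\Xi\times\Xi)$ as a continuous map into itself.

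For the extension to $\S'(\Xi\times\Xi)$, the clean route is duality: compute the transpose of $\R$ on elementary tensors using \eqref{inceputul} and the cyclicity/symmetry relations \eqref{croco}, \eqref{chavez}, and check that $\R$ (or a close relative of it) is its own transpose up to a sign or a reflection of variables — i.e. that $\<\!\<\R F,G\>\!\>=\<\!\<F,\R' G\>\!\>$ with $\R'$ again continuous $\S(\Xi\times\Xi)\to\S(\Xi\times\Xi)$. Then $\R$ extends to $\S'(\Xi\times\Xi)$ by $\<\!\<\R u,G\>\!\>:=\<\!\<u,\R' G\>\!\>$, continuously for the strong topology, and uniqueness is automatic from density of $\S(\Xi\times\Xi)$ in $\S'(\Xi\times\Xi)$. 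The main obstacle I anticipate is precisely pinning down the explicit kernel of $\R$ and its transpose: one must be careful with the reversed order in the second tensor slot (as flagged after \eqref{dublu}) and with the normalization of $\mathfrak F$, so that the symmetry making the duality argument work comes out correctly rather than off by a variable swap; everything after that is a routine Schwartz-seminorm estimate riding on Hypothesis~B.
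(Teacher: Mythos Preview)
Your strategy is workable but takes a more laborious route than the paper, and your explicit kernel guess is slightly off. The paper never computes a kernel formula at this stage. Having rewritten $\<\!\<\R(f_1\otimes f_2),g_1\otimes g_2\>\!\>=\<g_1,f_1\# g_2\# f_2\>$ via cyclicity (as you also do), it simply observes that by Hypothesis~B this identity extends to $g_1,g_2\in\S'(\Xi)$, so that $\R(f_1\otimes f_2)$ defines a continuous linear functional on $\S'(\Xi\times\Xi)$; reflexivity of $\S(\Xi\times\Xi)$ then identifies it with an element of $\S(\Xi\times\Xi)$, with no explicit seminorm estimates on any kernel. (The explicit formula, derived only afterwards as a Corollary, reads $[\R(f\otimes g)](X,Y)=\<\delta_X\# f,\delta_Y\# g\>$, involving point masses rather than your conjectured exponentials $\e_{-Y}$ --- the latter belong to the $\N$ and $\M$ maps.)

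For the extension to $\S'(\Xi\times\Xi)$ your plan matches the paper's: a short calculation on elementary tensors shows that $\R$ coincides with its own adjoint, $\R=\R^*$, which in terms of the transpose reads $\R=\kappa\circ\R^T\circ\kappa$ with $\kappa$ complex conjugation, and this formula furnishes the extension. Your anticipated ``variable swap or sign'' subtlety turns out to be absorbed cleanly into the self-adjointness. What your approach would buy, if carried through with the correct kernel, is an explicit integral representation from the outset --- useful for concrete computations --- at the cost of the Schwartz-seminorm bookkeeping (smoothness and decay of $Y\mapsto f_1\#\delta_Y\# f_2$ jointly in both variables) that the reflexivity argument sidesteps entirely.
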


\begin{proof}
For $f_1,f_2,g_1,g_2\in\mathcal{S}(\Xi)$, using the cyclicity property following from Hypothesis A, we can write
$$
\<\!\<\mathbf R(f_1\otimes f_2),g_1\otimes g_2\>\!\>=\<g_1,f_1\#g_2\#f_2\>.
$$
By Hypothesis B, this can be extended to the case $g_1,g_2\in\mathcal{S}'(\Xi)$. The necessary continuity properties needed to justify
$\R$ as a linear continuous mapping from $\S(\Xi\times\Xi)$ to the strong dual of $\S'(\Xi\times\Xi)$
(on which we consider the strong topology)
follows basically from Remark \ref{totusi}. Then we use the reflexivity of $\S(\Xi\times\Xi)$ to identify it with this dual.

A simple calculation shows that $\R$ coincides with its adjoint $\R^*$ :
$$
\begin{aligned}
&\<\!\<\overline{\R(f_1\otimes f_2)},g_1\otimes g_2\>\!\>=\overline{\<\!\<\R(f_1\otimes f_2),\overline g_1\otimes
\overline g_2\>\!\>}=\overline{\<\overline g_1\#f_1,\overline g_2\#f_2\>}\\
&=\<\overline f_1\#g_1,\overline f_2\#g_2\>=\<\!\<\R(g_1\otimes g_2),\overline f_1\otimes \overline f_2\>\!\>
=\<\!\<\overline{f_1\otimes f_2},\R(g_1\otimes g_2)\>\!\>.
\end{aligned}
$$
In terms of the transpose this is written
$\R={\kappa}\circ \R^T\circ{\kappa}$ with $\kappa$ the complex conjugation.
This formula allows us to extend $\R$ to a linear continuous map on $\mathcal{S}'(\Xi\times\Xi)$.
\end{proof}

\begin{Corollary}
Under Hypothesis A and B, for any two test functions $f$ and $g$ in $\mathcal{S}(\Xi\times\Xi)$ we have the formula
$$
\big[\R(f\otimes g)\big](X,Y)\,=\,\<\delta_X\#f,\delta_Y\#g\>.
$$
\end{Corollary}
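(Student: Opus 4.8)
The plan is to evaluate $\big[\R(f\otimes g)\big](X,Y)$ by pairing it against point Dirac measures and unwinding the definition of $\R$ from \eqref{inceputul}, rewritten in the cyclic form $\<\!\<\R(f_1\otimes f_2),g_1\otimes g_2\>\!\>=\<g_1,f_1\# g_2\# f_2\>$ that was already established in the proof of Proposition~\ref{def-R}. The key observation is that a function $\Phi\in\S'(\Xi\times\Xi)$ satisfies $\Phi(X,Y)=\<\!\<\Phi,\delta_X\otimes\delta_Y\>\!\>$ in the weak sense, using the decomposition of the identity from Lemma~\ref{basica}; more precisely, the second relation in \eqref{pleasca} gives $\int_\Xi\int_\Xi dXdY\,|\delta_X\otimes\delta_Y\rangle\langle\delta_X\otimes\delta_Y|=1$ on $L^2(\Xi\times\Xi)$, and the pairing $\<\!\<\Phi,\delta_X\otimes\delta_Y\>\!\>$ extends this to distributions in the usual way.

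First I would apply this to $\Phi=\R(f\otimes g)$ with $f,g\in\S(\Xi\times\Xi)$, so that $\R(f\otimes g)\in\S(\Xi\times\Xi)$ by Proposition~\ref{def-R}, and write
$$
\big[\R(f\otimes g)\big](X,Y)=\<\!\<\R(f\otimes g),\delta_X\otimes\delta_Y\>\!\>.
$$
Here the pairing on the right is the extension of $\<\!\<\cdot,\cdot\>\!\>$ to $\S(\Xi\times\Xi)\times\S'(\Xi\times\Xi)$. Next I would invoke the extended form of the defining identity \eqref{inceputul}: since $\R$ (as extended) satisfies $\<\!\<\R(f_1\otimes f_2),g_1\otimes g_2\>\!\>=\<g_1\#f_1,g_2\#f_2\>$ for $f_1,f_2\in\S(\Xi)$ and $g_1,g_2\in\S'(\Xi)$ — which is exactly the content of the passage "By Hypothesis B, this can be extended to the case $g_1,g_2\in\mathcal S'(\Xi)$" in the proof of Proposition~\ref{def-R} — we may take $g_1=\delta_X$, $g_2=\delta_Y\in\S'(\Xi)$ and $f_1\otimes f_2$ an elementary tensor, obtaining $\<\!\<\R(f_1\otimes f_2),\delta_X\otimes\delta_Y\>\!\>=\<\delta_X\#f_1,\delta_Y\#f_2\>$.

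Then I would pass from elementary tensors to general $f,g\in\S(\Xi\times\Xi)$ by the density of $\S(\Xi)\odot\S(\Xi)$ in $\S(\Xi\times\Xi)$ (from \eqref{tens1}) together with the separate continuity established in Remark~\ref{totusi} for the trilinear map $(f_1,g,f_2)\mapsto f_1\#g\#f_2$ and the continuity of $\R$: both sides of the claimed formula are separately continuous in $f$ and $g$ (the left side because $\R$ is continuous $\S(\Xi\times\Xi)\to\S(\Xi\times\Xi)$ and evaluation at $(X,Y)$ is continuous, the right side by Remark~\ref{totusi} applied to $\<\delta_X,f\#\delta_Y\#g\>$ after using cyclicity to rewrite $\<\delta_X\#f,\delta_Y\#g\>$), so agreement on the dense subset $\S(\Xi)\odot\S(\Xi)$ of each variable forces agreement everywhere. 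The main thing to be careful about — and the only real obstacle — is justifying the first displayed step, i.e. that evaluation of the Schwartz function $\R(f\otimes g)$ at a point really is computed by pairing against $\delta_X\otimes\delta_Y$ in the distributional sense consistent with $\<\!\<\cdot,\cdot\>\!\>$; this is routine once one notes $\R(f\otimes g)$ is genuinely in $\S(\Xi\times\Xi)$, but it is worth stating explicitly rather than leaving implicit, since the whole formula lives at the interface of the test-function and distribution pictures.
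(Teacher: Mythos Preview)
Your core argument---pair $\R(f\otimes g)\in\S(\Xi\times\Xi)$ against $\delta_X\otimes\delta_Y$ and invoke the extension of \eqref{inceputul} to $g_1,g_2\in\S'(\Xi)$ established in the proof of Proposition~\ref{def-R}---is exactly the paper's proof, which consists of the single line $\big[\R(f\otimes g)\big](X,Y)=\<\!\<\R(f\otimes g),\delta_X\otimes\delta_Y\>\!\>=\<\delta_X\#f,\delta_Y\#g\>$.

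Note, however, that the statement contains a typo: $f$ and $g$ are meant to lie in $\S(\Xi)$, not $\S(\Xi\times\Xi)$; otherwise neither $f\otimes g$ as an argument of $\R$ nor the expression $\delta_X\# f$ makes sense. Your final density step (``pass from elementary tensors to general $f,g\in\S(\Xi\times\Xi)$'') is therefore based on a misreading and is both unnecessary and ill-posed: with $f,g\in\S(\Xi)$ the tensor $f\otimes g$ is already elementary, and there is nothing further to extend.
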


\begin{proof}
We use the extension of $\R(f\otimes g)$ to a linear functional on $\S'(\Xi\times\Xi)$ to compute
$$
\big[\R(f\otimes g)\big](X,Y)=\<\!\<\R(f\otimes g),\delta_X\otimes \delta_Y\>\!\>\,=\,\<\delta_X\#f,\delta_Y\#g\>.
$$
\end{proof}

Let us introduce now two slightly transformed versions of our map $\R$.

\begin{Definition}
Suppose given a composition $\#:\mathcal{S}(\Xi)\times\mathcal{S}(\Xi)\rightarrow\S(\Xi)$ satisfying Hypothesis A. We define:
\begin{itemize}
\item {\it The $\N$-function}, uniquely determined by
\begin{equation}\label{N}
\<\!\<\N(f_1\otimes f_2),g_1\otimes g_2\>\!\>:=\<\widehat{g}_1\#f_1,\widehat{g}_2\#f_2\>.
\end{equation}
\item {\it The modulation mapping}.
We consider the change of variables map on $\Xi\times\Xi$
\begin{equation}\label{C}
C:\Xi\times\Xi\rightarrow\Xi\times\Xi,\ \ C(X,Y):=(-X,X-Y),
\end{equation}
\begin{equation}\label{CC}
\quad \mathbf C:\mathcal{S}(\Xi\times\Xi)\rightarrow\mathcal{S}(\Xi\times\Xi),\ \ \mathbf C(F):=F\circ C
\end{equation}
and define
\begin{equation}\label{M}
\M:=\N\circ\mathbf C.
\end{equation}
\end{itemize}
\end{Definition}

\begin{Remark}\label{relations}
{\rm The three linear maps $\R,\N,\M$ verify the relations:
\begin{enumerate}
\item $\R=\left(\mathfrak{F}\otimes\mathfrak{F}\right)\circ \N$.
\item $[\N(f\otimes g)](X,Y)=\<\e_X\#f,\e_Y\#g\>$.
\item $ [\M(f\otimes g)](X,Y)=\<\e_{-X}\#f,\e_{X-Y}\#g\>$.
\end{enumerate}
}
\end{Remark}

In absence of an explicit form of the multiplication $\#$, it is not easy to write down
$\M(F)$ for $F\in \mathcal{S}(\Xi\times\Xi)$ not being an elementary vector of the form $f_1\otimes f_2$.

\begin{Theorem}\label{prigoana}
Under Hypothesis A and B, the mappings $\M,\N,\R$ define linear continuous mappings of the Schwartz space $\mathcal{S}(\Xi\times\Xi)$.
They extend uniquely to linear continuous mappings of the space of temperate distributions $\mathcal{S}'(\Xi\times\Xi)$.
\end{Theorem}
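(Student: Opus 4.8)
The plan is to reduce the statement entirely to Proposition~\ref{def-R}, which already establishes it for $\R$, by noting that $\N$ and $\M$ are obtained from $\R$ by composition with explicit topological automorphisms of the underlying Schwartz space. For $\N$: relation~(1) of Remark~\ref{relations} reads $\R=(\mathfrak F\otimes\mathfrak F)\circ\N$, hence $\N=(\mathfrak F\otimes\mathfrak F)\circ\R$ since $\mathfrak F^2=1$. As $\mathfrak F$ is a topological automorphism of $\mathcal S(\Xi)$ and, by transposition, of $\mathcal S'(\Xi)$, the identifications~\eqref{tens1} together with nuclearity turn $\mathfrak F\otimes\mathfrak F$ into a topological automorphism of both $\mathcal S(\Xi\times\Xi)$ and $\mathcal S'(\Xi\times\Xi)$. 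Combined with Proposition~\ref{def-R}, this shows that $\N$ is a linear continuous self-map of $\mathcal S(\Xi\times\Xi)$ whose unique continuous extension to $\mathcal S'(\Xi\times\Xi)$ is $(\mathfrak F\otimes\mathfrak F)\circ\R$, where on the right-hand side $\R$ stands for the extension furnished by that proposition.

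For $\M=\N\circ\mathbf C$ as in~\eqref{M}, I would observe that $C(X,Y)=(-X,X-Y)$ is a linear bijection of $\Xi\times\Xi$, with inverse $(X,Y)\mapsto(-X,-X-Y)$ and $|\det C|=1$; consequently $\mathbf C\colon F\mapsto F\circ C$ is a topological automorphism of $\mathcal S(\Xi\times\Xi)$, and it extends by transposition to a topological automorphism of $\mathcal S'(\Xi\times\Xi)$, the Jacobian being of modulus $1$. Composing the mapping $\N$ just treated with $\mathbf C$ then exhibits $\M$ as a linear continuous self-map of $\mathcal S(\Xi\times\Xi)$ extending continuously to $\mathcal S'(\Xi\times\Xi)$. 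Uniqueness of all three extensions is automatic from the density of $\mathcal S(\Xi\times\Xi)$ in $\mathcal S'(\Xi\times\Xi)$ together with the continuity of the extensions, exactly as in the proof of Proposition~\ref{def-R}.

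I do not expect a genuine obstacle: granting Proposition~\ref{def-R} and Remark~\ref{relations}, the theorem is a formal corollary. The only points calling for some care are of a bookkeeping nature — checking that the continuous extensions of $\N$ and $\M$ produced by the factorizations $\N=(\mathfrak F\otimes\mathfrak F)\circ\R$ and $\M=\N\circ\mathbf C$ agree with the transpose extensions of $\N$ and $\M$ themselves, and keeping track of the harmless normalization constants attached to the symplectic Fourier transform (for which $\mathfrak F^2=1$ with the chosen Haar measure) and to the linear change of variables $C$.
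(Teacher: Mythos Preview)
Your proposal is correct and follows essentially the same approach as the paper's own proof, which simply invokes Proposition~\ref{def-R} and Remark~\ref{relations} together with the well-known continuity properties of the Fourier transform and of the change of variables transformation. You have merely spelled out in more detail why $\mathfrak F\otimes\mathfrak F$ and $\mathbf C$ are topological automorphisms of both $\mathcal S(\Xi\times\Xi)$ and $\mathcal S'(\Xi\times\Xi)$, which is exactly what the paper takes for granted.
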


\begin{proof}
We use Proposition \ref{def-R} and the Remark \ref{relations} together with the well known continuity properties of the Fourier
transform and of the change of variables transformation.
\end{proof}

%...........................................................................................................
\subsection{The $L^2$ extension}
%...........................................................................................................

Hypothesis A and B are not enough to insure isomorphism properties (or at least
non-triviality) for the application $\M$. We have succeeded to isolate an extra condition leading to a perfect behavior of
the mappings $\M,\N,\R$, which is fulfilled in the applications we have in mind.

Let us consider the following $\#$-induced action of $\Xi$ on $\mathcal{S}'(\Xi)$:
\begin{equation}\label{tracas}
\forall Z\,\in\Xi,\quad\Theta^\#_Z:\mathcal{S}'(\Xi)\rightarrow\mathcal{S}'(\Xi),\ \ \ \ \ \Theta^\#_Z(f):=\e_{-Z}\,\# f\,\#\e_{Z}.
\end{equation}
Notice that they restrict to automorphisms of the $^*$-algebras $\mathcal{S}(\Xi)$ and $L^2(\Xi)$.
The correspondence $Z\mapsto \Theta^\#_Z(f)$ might not have remarkable group properties but we require at least

\medskip
\noindent
{\bf Hypothesis C.} For any $f,g\in\mathcal{S}(\Xi)$
\begin{equation}\label{ceoare}
\int_\Xi\int_\Xi dYdZ\left[\Theta_Z(f)\right](Y)\,g(Y)=\int_\Xi dZ f(Z)\int_\Xi dY g(Y).
\end{equation}
Clearly a commutative product $\#$ cannot satisfy (\ref{ceoare}); the l.h.s. would not even be defined.

\begin{Theorem}\label{incep}
If the composition $\#:\mathcal{S}(\Xi)\times\mathcal{S}(\Xi)\rightarrow\S(\Xi)$ satisfies Hypothesis A, B and C,
then all the mappings $\M,\N,\R$ extend to unitary operators on the Hilbert space $L^2(\Xi\times\Xi)$.
\end{Theorem}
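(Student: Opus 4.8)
The strategy is to prove the claim first for $\N$ (equivalently, for $\R$, since $\R=(\mathfrak F\otimes\mathfrak F)\circ\N$ and $\mathfrak F\otimes\mathfrak F$ is unitary on $L^2(\Xi\times\Xi)$ by Plancherel), and then transfer it to $\M=\N\circ\mathbf C$ by observing that the change of variables $C(X,Y)=(-X,X-Y)$ is measure-preserving on $\Xi\times\Xi$ (its linear part has determinant $\pm1$), so $\mathbf C$ is unitary on $L^2$. Thus the entire content of the theorem is the unitarity of $\N$, and it suffices to show that $\N$ maps $\S(\Xi)\odot\S(\Xi)$, which is dense in $L^2(\Xi\times\Xi)$ by \eqref{tens2}, isometrically onto a dense subspace, with $\N$ already known to be continuous from $\S(\Xi\times\Xi)$ to itself by Theorem \ref{prigoana}.

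First I would compute $\langle\!\langle\N(f_1\otimes f_2),\N(h_1\otimes h_2)\rangle\!\rangle_{L^2}$ for $f_i,h_i\in\S(\Xi)$. Using the formula $[\N(f\otimes g)](X,Y)=\langle\e_X\#f,\e_Y\#g\rangle$ from Remark \ref{relations}(2), the $L^2$-inner product expands as
$$
\int_\Xi\int_\Xi dXdY\,\overline{\langle\e_X\#f_1,\e_Y\#f_2\rangle}\,\langle\e_X\#h_1,\e_Y\#h_2\rangle .
$$
The plan is to recognize the $X$-integration and the $Y$-integration as two instances of the resolution of identity $\int_\Xi dZ\,|\e_{-Z}\rangle\langle\e_Z|=1$ from Lemma \ref{basica}, after rewriting $\e_X\#f_i$ as $\Theta^\#$-translates paired against $\e$'s: here is precisely where Hypothesis C enters, since \eqref{ceoare} is exactly the statement that integrating $\Theta_Z(f)$ against a test function over both the $Z$ and the point variable factorizes as $(\int f)(\int g)$, which is the form needed to collapse the double integral. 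I expect to arrive, after using cyclicity \eqref{croco} and \eqref{chavez} to move factors around, at $\langle f_1,h_1\rangle_{L^2}\,\overline{\langle f_2,h_2\rangle}_{L^2}$ or the complex-conjugate-symmetric version thereof, i.e. $\langle\!\langle f_1\otimes f_2,h_1\otimes h_2\rangle\!\rangle_{L^2}$; linearity then gives the isometry on all of $\S(\Xi)\odot\S(\Xi)$.

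Having the isometry, I would conclude surjectivity (hence unitarity) by either of two routes: show $\N$ has dense range — for instance by producing an explicit adjoint/inverse on the dense subspace, using that $\R$ is its own adjoint (Proposition \ref{def-R}) together with $\R=(\mathfrak F\otimes\mathfrak F)\circ\N$ to get $\N^*=\N\circ(\mathfrak F\otimes\mathfrak F)$ and then checking $\N^*\N=1$ on elementary tensors by the same computation — or invoke that an isometry on a Hilbert space with dense range is unitary, the density of the range following because $\N$ restricted to $\S(\Xi\times\Xi)$ is a bijection onto a space containing $\S(\Xi)\odot\S(\Xi)$ (one can check $\N^{-1}$ preserves Schwartz elementary tensors via the inverse formula derived from Remark \ref{relations}). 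The main obstacle I anticipate is the bookkeeping in the central computation: justifying the interchange of integrals and the passage through the weak-sense identities of Lemma \ref{basica} when the inner objects $\e_X\#f$ are a priori only tempered distributions, not $L^2$ functions — this needs the separate-continuity statements of Corollary \ref{ciuhap} and Remark \ref{totusi}, plus a careful argument that, for fixed Schwartz data, $X\mapsto\e_X\#f$ is sufficiently regular (e.g. $\S'$-valued smooth with values that pair nicely) for Fubini and the Plancherel identity to apply. Once that technical point is secured, the algebraic identity drops out cleanly from Hypotheses A, B and C.
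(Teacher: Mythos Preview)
Your proposal is correct and follows essentially the same route as the paper: the paper also establishes isometry on elementary tensors by the chain cyclicity $\to$ Plancherel (Lemma \ref{basica}) $\to$ recognition of $\Theta^\#_X$ $\to$ Hypothesis C, and then gets surjectivity by expressing the adjoint as a composition of the map itself with the unitaries $\mathbf C$ and $\mathfrak F\otimes\mathfrak F$ (via $\R=\R^*$), exactly as you outline. The only cosmetic difference is that the paper runs the computation for $\M$ directly rather than for $\N$; note also that your formula for $\N^*$ should read $\N^*=(\mathfrak F\otimes\mathfrak F)\circ\N\circ(\mathfrak F\otimes\mathfrak F)$ rather than $\N\circ(\mathfrak F\otimes\mathfrak F)$, but since either way it is $\N$ composed with unitaries, the conclusion that $\N^*$ is an isometry is unaffected.
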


\begin{proof}
Taking into account the relations between the three mappings, it is enough to make the proof for one of them.
We are going to work with $\M$, the main object subsequently.

For $f_1,g_1,f_2,g_2\in \mathcal{S}(\Xi)$, using (\ref{croco}), (\ref{pleasca}) and (\ref{ceoare}) we compute
$$
\begin{aligned}
\<\!\<\overline{\M(f_1\otimes f_2)},\M(g_1\otimes g_2)\>\!\>&=\int_\Xi\int_\Xi dXdY\<\overline{\e_{-X}\# f_1\#\e_{X-Y}},
\overline f_2\>\<\e_{-X}\#g_1\#\e_{X-Y},g_2\>\\
&=\int_\Xi\int_\Xi dXdY\<\e_{Y-X}\#\overline f_1\#\e_{X},\overline f_2\>\<\e_{-X}\#g_1\#\e_{X-Y},g_2\>\\
&=\int_\Xi\int_\Xi dXdY \< \overline f_1\#\e_{X}\#\overline f_2,\e_{Y-X}\>\<\e_{X-Y},g_2\#\e_{-X}\#g_1\>\\
&=\int_\Xi dX\<\overline f_1\#\e_X\# \overline f_2,g_2\#\e_{-X}\#g_1\>\\
&=\int_\Xi dX\<\e_X\# \overline f_2\#g_2,\e_{-X}\#g_1\#\overline f_1\>\\
&=\int_\Xi dX\<\Theta_X(\overline f_2\#g_2),g_1\#\overline f_1\>\\
&=\int_\Xi dX\,(\overline f_2\#g_2)(X)\int_\Xi dY\,(g_1\#\overline f_1)(Y)\\
&=\<\overline f_2,g_2\>\<\overline f_1,g_1\>
=\<\!\<\overline{f_1\otimes f_2},g_1\otimes g_2\>\!\>.
\end{aligned}
$$
Therefore, one can extend $\M$ to an isometry of $L^2(\Xi\times\Xi)$.
However, seen again as an operator on $\mathcal{S}(\Xi\times\Xi)$, one has
$$
\M^*=\left[(\mathfrak F\otimes\mathfrak F)\circ \R\circ \mathbf C\,\right]^*=\mathbf C\circ \R\circ(\mathfrak F\otimes\mathfrak F)=
\mathbf C\circ (\mathfrak F\otimes\mathfrak F)\circ \M\circ \mathbf C\circ(\mathfrak F\otimes\mathfrak F),
$$
so $\M^*$ also extends to an isometry of $L^2(\Xi\times\Xi)$. This is enough to conclude.
\end{proof}

\begin{Corollary}\label{rolar}
If Hypothesis C is also fulfilled, the mappings $\M,\N,\R$ are topological isomorphisms of $\mathcal{S}(\Xi\times\Xi)$,
extending to topological isomorphisms of $\mathcal{S}'(\Xi\times\Xi)$.
\end{Corollary}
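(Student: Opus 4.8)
The plan is to reduce everything to the single map $\M$ and then combine Theorems \ref{prigoana} and \ref{incep}. By Remark \ref{relations} one has $\M=\N\circ\mathbf C$ and $\R=(\mathfrak F\otimes\mathfrak F)\circ\N$, where $\mathbf C$ is the pull-back along the linear automorphism $C$ of $\Xi\times\Xi$ and $\mathfrak F$ is the symplectic Fourier transform (with $\mathfrak F^2=1$). Since $\mathbf C$ and $\mathfrak F\otimes\mathfrak F$ are topological isomorphisms of $\mathcal{S}(\Xi\times\Xi)$ and, by duality, of $\mathcal{S}'(\Xi\times\Xi)$, and since $\N=\M\circ\mathbf C^{-1}$, $\R=(\mathfrak F\otimes\mathfrak F)\circ\M\circ\mathbf C^{-1}$, it suffices to prove the assertion for $\M$ and then compose.

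On $\mathcal{S}(\Xi\times\Xi)$: by Theorem \ref{prigoana}, $\M$ is a continuous linear self-map of $\mathcal{S}(\Xi\times\Xi)$, and by Theorem \ref{incep} it extends to a unitary $U$ on $L^2(\Xi\times\Xi)$, so $U^*U=UU^*=1$. The key point is that $U^*=\M^*$ also maps $\mathcal{S}(\Xi\times\Xi)$ continuously into itself: the factorization of $\M^*$ displayed in the proof of Theorem \ref{incep} writes $\M^*$ as a composition of $\R$ with a symplectic Fourier transform and a linear change of variables, each of which is a continuous self-map of $\mathcal{S}(\Xi\times\Xi)$ (for $\R$ this is Proposition \ref{def-R}; the others are standard). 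Hence, for $F\in\mathcal{S}(\Xi\times\Xi)\subset L^2(\Xi\times\Xi)$ we get $\M^*F\in\mathcal{S}(\Xi\times\Xi)$ and, restricting $U^*U=UU^*=1$, $\M(\M^*F)=F=\M^*(\M F)$. Thus $\M|_{\mathcal{S}(\Xi\times\Xi)}$ is a bijection with two-sided inverse $\M^*|_{\mathcal{S}(\Xi\times\Xi)}$, which is continuous; that is, $\M$ is a topological isomorphism of $\mathcal{S}(\Xi\times\Xi)$ (no open mapping theorem needed, though it could also be invoked since $\mathcal{S}(\Xi\times\Xi)$ is Fr\'echet).

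On $\mathcal{S}'(\Xi\times\Xi)$: by Theorem \ref{prigoana} the map $\M$ extends continuously to $\mathcal{S}'(\Xi\times\Xi)$, and applying the same theorem factor by factor to the formula for $\M^*$ shows that $\M^*$ extends continuously as well (each of $\R$, the Fourier transform and the change of variables does). The extensions are unique by density of $\mathcal{S}(\Xi\times\Xi)$ in $\mathcal{S}'(\Xi\times\Xi)$, and the relations $\M\circ\M^*=\M^*\circ\M=\mathrm{id}$, valid on the dense subspace $\mathcal{S}(\Xi\times\Xi)$, persist on all of $\mathcal{S}'(\Xi\times\Xi)$ by continuity. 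Hence the extension of $\M$ is a topological isomorphism of $\mathcal{S}'(\Xi\times\Xi)$, and feeding this back into the first step gives the same conclusion for $\N$ and $\R$.

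I expect no genuine obstacle: the only step requiring care is that $\M^*$ — a priori merely a Hilbert-space operator — preserves $\mathcal{S}(\Xi\times\Xi)$ and does so continuously, and this is handed to us directly by the explicit expression for $\M^*$ obtained in the course of proving Theorem \ref{incep}; everything else is routine bookkeeping.
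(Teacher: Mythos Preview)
Your argument is correct and is exactly the intended unpacking of the paper's one-line proof ``Follows from Theorems \ref{prigoana} and \ref{incep}'': you reduce to $\M$ via the relations in Remark \ref{relations}, use Theorem \ref{prigoana} for continuity on $\mathcal S$ and $\mathcal S'$, and use the factorization $\M^*=\mathbf C\circ\R\circ(\mathfrak F\otimes\mathfrak F)$ from the proof of Theorem \ref{incep} (together with Proposition \ref{def-R} for $\R$) to see that $\M^*$ also preserves $\mathcal S(\Xi\times\Xi)$, whence $\M$ is bijective there and, by density and continuity, on $\mathcal S'(\Xi\times\Xi)$ as well.
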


\begin{proof}
Follows from Theorems \ref{prigoana} and \ref{incep}.
\end{proof}

%...........................................................................................................
\subsection{Algebraic properties}\label{gebraic}
%...........................................................................................................

\begin{Theorem}\label{fundamentala}
Under Hypothesis A and B, the application
\begin{equation}\label{udatu}
\M:\(\mathcal{S}(\Xi\times\Xi),\square,^\square\)\rightarrow\(\mathcal{S}(\Xi\times\Xi),\diamond,^\diamond\)
\end{equation}
is a morphism of $^*$-algebras. If Hypothesis C is also verified, it is an isomorphism.
\end{Theorem}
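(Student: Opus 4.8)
The plan is to verify the morphism property first on elementary tensors, where everything can be computed from the defining formula for $\M$ and the multiplication rules for $\square$ and $\diamond$, and then to promote the identity to all of $\mathcal{S}(\Xi\times\Xi)$ by density of $\mathcal{S}(\Xi)\odot\mathcal{S}(\Xi)$ together with the separate continuity of $\square$, $\diamond$ and $\M$ (the latter being Theorem \ref{prigoana}). For the $^*$-compatibility I would separately check that $\M(F^\square)=\M(F)^\diamond$, again testing first on $F=f_1\otimes f_2$: the left side involves $\M(\overline{f_1}\otimes\overline{f_2})(X,Y)=\<\e_{-X}\#\overline{f_1},\e_{X-Y}\#\overline{f_2}\>$, while the right side, using the involution $G^\diamond(X,Y)=\overline{G(X-Y,-Y)}$, is $\overline{[\M(f_1\otimes f_2)](X-Y,-Y)}=\overline{\<\e_{Y-X}\#f_1,\e_{-X}\#f_2\>}$; these match after complex-conjugating the pairing and using $\overline{\e_Z}=\e_{-Z}$ together with the fact that $\<\cdot,\cdot\>$ is a bilinear (not sesquilinear) form with $\overline{\<f,g\>}=\<\overline f,\overline g\>$.

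\textbf{The main computation.} The heart of the matter is the identity $\M\big((f_1\otimes f_2)\square(g_1\otimes g_2)\big)=\M(f_1\otimes f_2)\diamond\M(g_1\otimes g_2)$. By \eqref{dublu} the left side is $\M\big((f_1\# g_1)\otimes(g_2\# f_2)\big)$, whose value at $(X,Y)$ is, by Remark \ref{relations}(3),
\begin{equation*}
\<\e_{-X}\#f_1\# g_1,\e_{X-Y}\#g_2\# f_2\>.
\end{equation*}
For the right side I would write out the crossed product \eqref{crosu}:
\begin{equation*}
\big[\M(f_1\otimes f_2)\diamond\M(g_1\otimes g_2)\big](X,Y)=\int_\Xi dZ\,\<\e_{-X}\#f_1,\e_{X-Z}\#f_2\>\,\<\e_{-(X-Z)}\#g_1,\e_{(X-Z)-(Y-Z)}\#g_2\>.
\end{equation*}
The second pairing simplifies to $\<\e_{Z-X}\#g_1,\e_{X-Y}\#g_2\>$, so the integrand is
\begin{equation*}
\<\e_{-X}\#f_1,\e_{X-Z}\#f_2\>\,\<\e_{Z-X}\#g_1,\e_{X-Y}\#g_2\>.
\end{equation*}
The key is to recognize the integral over $Z$ as an instance of Plancherel/resolution of the identity. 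Substituting $Z\mapsto X-Z$ turns $\e_{X-Z}$ into $\e_Z$ and $\e_{Z-X}$ into $\e_{-Z}$, giving $\int_\Xi dZ\,\<\e_{-X}\#f_1,\e_Z\#f_2\>\<\e_{-Z}\#g_1,\e_{X-Y}\#g_2\>$. Now using the cyclicity \eqref{croco} to move factors, $\<\e_{-X}\#f_1,\e_Z\#f_2\>=\<f_2\#\e_{-X}\#f_1,\e_Z\>=(\mathfrak F(f_2\#\e_{-X}\#f_1))(Z)$ and similarly $\<\e_{-Z}\#g_1,\e_{X-Y}\#g_2\>=\<g_1\#\e_{X-Y}\#g_2,\e_{-Z}\>=(\mathfrak F(g_1\#\e_{X-Y}\#g_2))(-Z)$; here Hypothesis B guarantees the triple products $f_2\#\e_{-X}\#f_1$ and $g_1\#\e_{X-Y}\#g_2$ lie in $\mathcal{S}(\Xi)$, so the Fourier transforms make sense. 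Then the first identity in Lemma \ref{basica} (equivalently the computation $\int_\Xi dZ\,(\mathfrak F u)(Z)(\mathfrak F v)(-Z)=\<u,v\>$ in its proof) collapses the $Z$-integral to $\<f_2\#\e_{-X}\#f_1,\,g_1\#\e_{X-Y}\#g_2\>$, and a final application of cyclicity rearranges this to $\<\e_{-X}\#f_1\# g_1,\e_{X-Y}\#g_2\# f_2\>$, matching the left side.

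\textbf{Obstacles and the isomorphism claim.} The main delicacy is bookkeeping: making sure each triple product to which I apply $\mathfrak F$ genuinely lands in $\mathcal{S}(\Xi)$ (this is precisely where Hypothesis B is used, cf. Remark \ref{totusi}) and that the change of variables and cyclicity manipulations are applied in a legitimate order; I expect this to be the only real friction, with no deep idea beyond Plancherel hidden inside. Once the morphism property holds on $\mathcal{S}(\Xi\times\Xi)$, the isomorphism statement under Hypothesis C is immediate: by Corollary \ref{rolar} the map $\M$ is already a topological (indeed, by Theorem \ref{incep}, unitary on $L^2$) isomorphism of $\mathcal{S}(\Xi\times\Xi)$, so its set-theoretic inverse exists and is continuous, and a continuous inverse of a $^*$-algebra morphism is automatically a $^*$-algebra morphism; hence $\M$ is a $^*$-algebra isomorphism.
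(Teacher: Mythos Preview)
Your proposal is correct and follows essentially the same argument as the paper: verify the product and involution identities on elementary tensors via the formula $[\M(f\otimes g)](X,Y)=\<\e_{-X}\#f,\e_{X-Y}\#g\>$, use cyclicity to bring the $Z$-integral into the form handled by Lemma \ref{basica}, and then invoke Corollary \ref{rolar} (i.e.\ Hypothesis C via Theorem \ref{incep}) for bijectivity. The only cosmetic difference is that you perform the change of variables $Z\mapsto X-Z$ explicitly and phrase the Plancherel step through the Fourier transform of the triple products $f_2\#\e_{-X}\#f_1$ and $g_1\#\e_{X-Y}\#g_2$, whereas the paper applies \eqref{pleasca} directly to the pair $\e_{X-Z},\,\e_{Z-X}$.
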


\begin{proof}
We are going to use (\ref{croco}) and (\ref{pleasca}) to show that
\begin{equation}\label{minunata}
\M(f_1\otimes f_2)\diamond \M(g_1\otimes g_2)=\M[(f_1\otimes f_2)\square(g_1\otimes g_2)]
\end{equation}
for any $f_1,f_2,g_1,g_2\in\S(\Xi)$; this is enough to prove that $\M$ intertwines the products $\square$ and $\diamond$.
One has for all $X,Y\in\Xi$
$$
\begin{aligned}
\left[\M(f_1\otimes f_2)\diamond \M(g_1\otimes g_2)\right](X,Y)&=\int_\Xi dZ[\M(f_1\otimes f_2)](X,Z)[\M(g_1\otimes g_2)](X-Z,Y-Z)\\
&=\int_\Xi dZ\<\e_{-X}\#f_1\#\e_{X-Z},f_2\>\<\e_{Z-X}\#g_1\#\e_{X-Y},g_2\>\\
&=\int_\Xi dZ\<f_2\#\e_{-X}\#f_1,\e_{X-Z}\>\<\e_{Z-X},g_1\#\e_{X-Y}\#g_2\>\\
&=\<f_2\#\e_{-X}\#f_1,g_1\#\e_{X-Y}\#g_2\>\\
&=\<\e_{-X}\#(f_1\#g_1)\#\e_{X-Y},g_2\# f_2\>\\
&=\M[(f_1\#g_1)\otimes(g_2\,\#f_2)](X,Y)\\
&=\M[(f_1\otimes f_2)\square(g_1\otimes g_2)](X,Y).
\end{aligned}
$$
For the involution:
$$
\begin{aligned}
\left[\M(\overline f_1\otimes \overline f_2)\right](X,Y)&=\<\e_{-X}\#\overline f_1\#\e_{X-Y},\overline f_2\>
=\overline{\<\e_{Y-X}\#f_1\#\e_X,f_2\>}\\
&=\overline{[\M(f_1\otimes f_2)](X-Y,-Y)}
=[\M(f_1\otimes f_2)]^\diamond (X,Y).
\end{aligned}
$$
The last part of the statement follows from Theorem \ref{incep}.
\end{proof}

The same can be stated about
\begin{equation}\label{udatu}
\R:\(\mathcal{S}(\Xi\times\Xi),\square,^\square\)\rightarrow\(\mathcal{S}(\Xi\times\Xi),\tilde\diamond,^{\tilde\diamond}\)
\end{equation}
and
\begin{equation}\label{udatu}
\N:\(\mathcal{S}(\Xi\times\Xi),\square,^\square\)\rightarrow\(\mathcal{S}(\Xi\times\Xi),\tilde\diamond,^{\tilde\diamond}\).
\end{equation}
The proofs are computations similar to those above; one can also use the relations between $\M,\N,\R$ contained in Remark \ref{relations}
and the algebraic properties of the transformations $\mathfrak F$ and $\mathbf C$.

\begin{Remark}\label{ben}
{\rm Let us recall the well known fact that kernel composition leaves $L^2(\Xi\times\Xi)$
invariant being separately continuous with respect to the $\|\cdot\|_{L^2}$ norm (that we shall simply denote by $\|\cdot\|_{2}$).
Actually $\(L^2(\Xi\times\Xi),\tilde\diamond,^{\tilde\diamond}\)$ is a normed $^*$-algebra containing $\S(\Xi\times\Xi)$ densely.
Then, obviously, $\(L^2(\Xi\times\Xi),\square,^\square\)$ will also be a normed $^*$-algebra and the same can be said about
$\Big(L^2(\Xi),\#,^\#\Big)$.
}
\end{Remark}

%...........................................................................................................
\subsection{Localization}\label{lizeisn}
%...........................................................................................................

For any $h\in\mathcal{S}'(\Xi)$ we define the linear injective map
\begin{equation}\label{provicia}
J_h:\mathcal{S}'(\Xi)\rightarrow\S'(\Xi)\widehat\otimes\S'(\Xi)\cong\mathcal{S}'(\Xi\times\Xi),\ \ \ \ \ J_h(f):=f\otimes h,
\end{equation}
which clearly restricts to a linear injection $J_h:\mathcal{S}(\Xi)\rightarrow\mathcal{S}(\Xi)\widehat\otimes\mathcal{S}'(\Xi)$.
It can be shown easily that $J_h$ is continuous when considering on $\mathcal{S}'$ either the weak or the strong topology, respectively.

Although a general $h$ can be useful, and we think of the case
$h=1$ for instance, in the present article we are only going to consider $h\in\S(\Xi)$.
Actually, if $h\in\mathcal{S}(\Xi)$, one also has $J_h:\mathcal{S}(\Xi)\rightarrow\mathcal{S}(\Xi\times\Xi)$, $J_{h}:L^2(\Xi)\rightarrow
L^2(\Xi\times\Xi)$ and $J_h:\mathcal{S}'(\Xi)\rightarrow\mathcal{S}'(\Xi)\widehat\otimes\mathcal{S}(\Xi)$ as linear continuous injections.
The transpose map $\widetilde J_h:\S'(\Xi\times\Xi)\rightarrow\S'(\Xi)$
\begin{equation}\label{jtilda}
\<\widetilde J_h(F),f\>:=\<\!\<F,J_h(f)\>\!\>,\ \quad F\in\S'(\Xi\times\Xi),\ f\in\S(\Xi)
\end{equation}
will also be useful. We record the relations valid for $h,k\in\S(\Xi)$:
\begin{equation}\label{vortex}
\widetilde J_k J_h=\<k,h\>{\rm id},\ \ \ \ \ J_h\widetilde J_k={\rm id}\otimes|h\rangle\langle k|.
\end{equation}

Let us introduce now the main tool:

\begin{Definition}\label{ools}
Let $h\in \S(\Xi)\setminus\{0\}$ be given (we might call it {\it a window}). We define $\M_h:\S'(\Xi)\rightarrow\S'(\Xi\times\Xi)$ and
$\widetilde \M_h:\S'(\Xi\times\Xi)\rightarrow\S'(\Xi)$ by
\begin{equation}\label{cordingly}
\M_h:=\M\circ J_h,\ \ \ \ \ \M_h(f):=\M(f\otimes h),
\end{equation}
and
\begin{equation}\label{cardingly}
\widetilde \M_h:=\widetilde J_h\circ \M^{-1}.
\end{equation}
\end{Definition}

We have the following equalities:
\begin{equation}\label{vertex}
\widetilde \M_k \M_h=\<k,h\>{\rm id},\ \ \ \ \ \M_h\widetilde \M_k=\M({\rm id}\otimes|h\rangle\langle k|)\M^{-1}
\end{equation}
and
\begin{equation}\label{adjunctu}
\widetilde \M_{h}(G)=\int_\Xi \int_\Xi dXdY\,G(X,Y)\,\e_X\# {h}\# \e_{Y-X}.
\end{equation}
In particular, $\frac{1}{\parallel h\parallel}\M_{h}$ is an $L^2$-isometry. If $\<k,h\>\ne 0$,
one could call the first relation in (\ref{vertex}) {\it the inversion formula}.

Taking into consideration the algebraic properties of the isomorphism $\M$ and
$$
J_h(f)\square J_k(g)=J_{k\# h}(f\#g),\ \quad J_h(f)^\square=J_{h\!^\#}(f^\#),
$$
one gets

\begin{Corollary}\label{fundamentalu}
If $\,h\# h=h=\overline h\ne 0$, then $\M_{h}:\left(L^2(\Xi),\#,^\#\right)\rightarrow
\left(L^2(\Xi\times\Xi),\diamond,^\diamond\right)$ is an injective morphism of $^*$-algebras, which sends the $^*$-subalgebra
$\S(\Xi)$ into $\S(\Xi\times\Xi)$.
\end{Corollary}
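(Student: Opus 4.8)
The plan is to deduce Corollary \ref{fundamentalu} entirely from the algebraic machinery already in place, so that essentially no new computation is needed. First I would observe that the hypothesis $h\#h=h=\overline h\neq 0$ says precisely that $h$ is a self-adjoint idempotent for $(\mathcal{S}(\Xi),\#,{}^\#)$. Combining the two displayed identities $J_h(f)\square J_k(g)=J_{k\#h}(f\#g)$ and $J_h(f)^\square=J_{h^\#}(f^\#)$ with the case $k=h$, we get $J_h(f)\square J_h(g)=J_{h\#h}(f\#g)=J_h(f\#g)$ and $J_h(f)^\square=J_{\overline h}(\overline f)=J_h(f^\#)$. Hence $J_h$ is a $^*$-morphism from $(\mathcal{S}(\Xi),\#,{}^\#)$ into $(\mathcal{S}(\Xi\times\Xi),\square,{}^\square)$; it is injective because $h\neq 0$ (this is the injectivity of $J_h$ already recorded after \eqref{provicia}). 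On $L^2$ the same identities hold by the density of $\S(\Xi)$ in $L^2(\Xi)$ and the fact, noted in Remark \ref{ben}, that $(L^2(\Xi),\#,{}^\#)$ and $(L^2(\Xi\times\Xi),\square,{}^\square)$ are normed $^*$-algebras, so $J_h$ extends to a $^*$-morphism $J_h:(L^2(\Xi),\#,{}^\#)\to(L^2(\Xi\times\Xi),\square,{}^\square)$, still injective since $\|J_h(f)\|_2=\|f\|_2\|h\|_2$.

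Next I would simply compose with $\M$. By Theorem \ref{fundamentala}, $\M$ is a morphism of $^*$-algebras from $(\mathcal{S}(\Xi\times\Xi),\square,{}^\square)$ to $(\mathcal{S}(\Xi\times\Xi),\diamond,{}^\diamond)$ under Hypotheses A and B, and it maps $\S(\Xi\times\Xi)$ into itself. Therefore $\M_h=\M\circ J_h$ is a $^*$-morphism from $(\mathcal{S}(\Xi),\#,{}^\#)$ into $(\mathcal{S}(\Xi\times\Xi),\diamond,{}^\diamond)$ carrying $\S(\Xi)$ into $\S(\Xi\times\Xi)$, which is the last clause of the statement. Since under Hypothesis C Theorem \ref{incep} gives that $\M$ extends to a unitary (in particular an isometric bijection) of $L^2(\Xi\times\Xi)$, the composite $\M_h$ extends to a bounded injective operator $L^2(\Xi)\to L^2(\Xi\times\Xi)$; injectivity is inherited from that of $J_h$ on $L^2$ and the injectivity of $\M$ on $L^2$. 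The multiplicativity relation $\M_h(f\#g)=\M_h(f)\diamond\M_h(g)$ and the $^*$-relation $\M_h(f^\#)=\M_h(f)^\diamond$ then propagate from $\S(\Xi)$ to all of $L^2(\Xi)$ by density, using the separate $\|\cdot\|_2$-continuity of $\diamond$ recalled in Remark \ref{ben} together with the boundedness of $\M_h$.

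There is essentially no hard part here; the only point that needs a word of care is the passage from $\S$ to $L^2$. Concretely, to see that $\M_h(f\#g)=\M_h(f)\diamond\M_h(g)$ for arbitrary $f,g\in L^2(\Xi)$ I would pick sequences $f_j,g_j\in\S(\Xi)$ with $f_j\to f$, $g_j\to g$ in $L^2(\Xi)$; then $f_j\#g_j\to f\#g$ in $L^2(\Xi)$ by the separate continuity of $\#$ on $(L^2(\Xi),\|\cdot\|_2)$ (Remark \ref{ben}) applied along the two variables, so $\M_h(f_j\#g_j)\to\M_h(f\#g)$ by continuity of $\M_h$; on the other side $\M_h(f_j)\to\M_h(f)$ and $\M_h(g_j)\to\M_h(g)$ in $L^2(\Xi\times\Xi)$, whence $\M_h(f_j)\diamond\M_h(g_j)\to\M_h(f)\diamond\M_h(g)$ by the separate $\|\cdot\|_2$-continuity of $\diamond$; comparing limits gives the identity. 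The $^*$-identity is handled the same way, using the obvious $\|\cdot\|_2$-continuity of the involutions. This completes the proof.
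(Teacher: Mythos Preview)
Your proof is correct and follows exactly the route the paper intends: the corollary is stated right after the displayed identities $J_h(f)\square J_k(g)=J_{k\#h}(f\#g)$ and $J_h(f)^\square=J_{h^\#}(f^\#)$, and the paper's argument is simply that these, combined with the $^*$-morphism property of $\M$ from Theorem \ref{fundamentala} and the unitarity from Theorem \ref{incep}, yield the claim. You have merely spelled out the density passage to $L^2$ that the paper leaves implicit; note in this regard that separate $\|\cdot\|_2$-continuity of a bilinear map on Banach spaces already forces joint continuity by the uniform boundedness principle, so your two-variable limit $f_j\#g_j\to f\#g$ is justified without further work.
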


%..............................................................................................
\section{Modulation spaces of symbols}\label{spasy}
%.............................................................................................

%..............................................................................................
\subsection{General facts}\label{spasoy}
%.............................................................................................

We shall always assume that Hypothesis A, B and C hold.
If $0\ne h\in\mathcal{S}(\Xi)$, then $\M_h:\mathcal{S}'(\Xi)\rightarrow
\mathcal{S}'(\Xi\times\Xi)$ is a linear continuous injection. We can use it to pull-back structure from the final space.

\begin{Definition}\label{induc}
Let $\mathfrak L$ be a vector subspace of $\mathcal{S}'(\Xi\times\Xi)$. The vector subspace
$\M_{h}^{-1}(\mathfrak L)$ of $\mathcal{S}'(\Xi)$ is denoted by $\mathfrak L^\M_h$.
\end{Definition}

If it is granted that we only work with $\M$ (and not with $\R$ or $\N$), another good notation would be $\LL^\#_h$.
On the other hand, taking into account the simple connection between the isomorphisms $\M,\N$ and $\R$, we see that any one could be used
to induce spaces in equivalent ways. Using $\M$ is just a matter of convenience. However, in a more general setting
(in which the Fourier transformation is no longer available), using $\R$ might be the single choice.

\begin{Remark}\label{prostii}
{\rm If $h\in\mathcal{S}(\Xi)\setminus\{0\}$, then obviously $f\in\mathcal{S}(\Xi)$ if and only if $J_h(f)\in\mathcal{S}(\Xi\times\Xi)$.
This implies that $f\in\mathcal{S}(\Xi)$ if and only if $\M_h(f)\in\mathcal{S}(\Xi\times\Xi)$.
Thus $\mathcal{S}(\Xi\times\Xi)^\M_h=\mathcal{S}(\Xi)$, so
$\mathcal{S}(\Xi\times\Xi)\subset\LL$ implies $\mathcal{S}(\Xi)\subset\LL^\M_h$.
It is also clear that $L^2(\Xi\times\Xi)^\M_h=L^2(\Xi)$ and $\mathcal{S}'(\Xi\times\Xi)^\M_h=\mathcal{S}'(\Xi)$.}
\end{Remark}

As in many references on coorbit theory and modulation spaces (\cite{F2,FG,G1,T2} and many others) we start by establishing
the nature of the spaces produced by Definition \ref{induc}.

\begin{Proposition}\label{sistom}
Let $\big(\LL,\parallel\cdot\parallel_\LL\big)$ be a normed space continuously embedded in $\mathcal{S}'(\Xi\times\Xi)$.
\begin{enumerate}
\item For $h\in\mathcal{S}(\Xi)\setminus\{0\}$, $\mathfrak L^\M_h:=\M_{h}^{-1}(\mathfrak L)$ is a normed space
with $\parallel\cdot\parallel_{\mathfrak L^\M_h}:=\parallel \M_h\cdot\parallel_\mathfrak L$\,.
\item
If $\LL$ is a Banach space, $\LL^\M_h$ is also a Banach space.
\end{enumerate}
\end{Proposition}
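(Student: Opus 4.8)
The plan is to exploit the fact that $\M_h$ is an injective linear continuous map (Theorem~\ref{incep} and the discussion following Definition~\ref{ools}), so that it sets up a linear bijection between $\LL^\M_h$ and the subspace $\M_h(\LL^\M_h)=\M_h(\S'(\Xi))\cap\LL$ of $\LL$. Transporting the norm of $\LL$ along this bijection is exactly what the definition $\|\cdot\|_{\LL^\M_h}:=\|\M_h\cdot\|_\LL$ does.

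For part (1), first I would check that $\|\cdot\|_{\LL^\M_h}$ is a genuine norm. Homogeneity and the triangle inequality are immediate from linearity of $\M_h$ and the corresponding properties of $\|\cdot\|_\LL$. Positivity (definiteness) is the only point requiring the injectivity of $\M_h$: if $\|f\|_{\LL^\M_h}=\|\M_h f\|_\LL=0$, then $\M_h f=0$ in $\S'(\Xi\times\Xi)$ because $\LL$ is \emph{continuously embedded} in $\S'(\Xi\times\Xi)$ (so the zero element of $\LL$ is the zero distribution), and then $f=0$ since $\M_h$ is injective. This uses $h\neq 0$.

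For part (2), suppose $\LL$ is complete and let $(f_k)$ be a Cauchy sequence in $\LL^\M_h$. Then $(\M_h f_k)$ is Cauchy in $\LL$, hence converges in $\LL$-norm to some $G\in\LL$. By the continuous embedding $\LL\hookrightarrow\S'(\Xi\times\Xi)$, $\M_h f_k\to G$ also in $\S'(\Xi\times\Xi)$. Now I would use that $\M_h=\M\circ J_h$ is continuous from $\S'(\Xi)$ to $\S'(\Xi\times\Xi)$ and, more importantly, that it admits the continuous left inverse $\tfrac{1}{\<h,h\>}\widetilde\M_h$ given by the inversion formula in~\eqref{vertex} (note $\<h,h\>\neq0$ since $h\neq0$). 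Applying this left inverse, which is continuous $\S'(\Xi\times\Xi)\to\S'(\Xi)$, gives $f_k=\tfrac{1}{\<h,h\>}\widetilde\M_h(\M_h f_k)\to\tfrac{1}{\<h,h\>}\widetilde\M_h(G)=:f$ in $\S'(\Xi)$. It remains to identify this limit as an element of $\LL^\M_h$: since $\M_h f_k\to\M_h f$ in $\S'(\Xi\times\Xi)$ (by continuity of $\M_h$) and also $\M_h f_k\to G$ in $\S'(\Xi\times\Xi)$, uniqueness of limits forces $\M_h f=G\in\LL$, so $f\in\M_h^{-1}(\LL)=\LL^\M_h$ and $\|f_k-f\|_{\LL^\M_h}=\|\M_h f_k-G\|_\LL\to0$.

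The only mildly delicate point—and the one I would take care to state explicitly—is that all convergences must be reconciled inside the single Hausdorff space $\S'(\Xi\times\Xi)$; this is what legitimately pins down the candidate limit. Everything else is a formal transport of structure through the injection $\M_h$ together with its continuous one-sided inverse coming from the inversion formula~\eqref{vertex}. No genuine obstacle is expected; the proof is a standard pull-back-of-a-Banach-space argument, the kind that appears in coorbit theory.
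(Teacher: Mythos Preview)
Your argument is correct and follows essentially the same route as the paper's: both use the continuous embedding $\LL\hookrightarrow\S'(\Xi\times\Xi)$ to carry norm convergence down to distributions, apply the continuous left inverse coming from the inversion formula~\eqref{vertex} to manufacture the candidate limit $f\in\S'(\Xi)$, and then invoke continuity of $\M_h$ together with uniqueness of limits in $\S'(\Xi\times\Xi)$ to conclude $\M_h f=G\in\LL$. One small correction worth making explicit: in the paper's \emph{bilinear} convention $\langle h,h\rangle=\int_\Xi h^2$ need not be nonzero for complex-valued $h$, so the safe choice (and the one the paper actually uses) is $\|h\|^2=\langle\bar h,h\rangle$, i.e.\ take $\tfrac{1}{\|h\|^2}\widetilde\M_{\bar h}$ as the left inverse.
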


\begin{proof}
The first assertion is obvious, so we only need to show that the inducing process preserves completeness.

Clearly $\mathfrak L^\M_h$ is isometrically isomorphic to $\mathfrak L(\M,h):=\mathfrak L\,\cap\,\M_h[\mathcal{S}'(\Xi)]$.
So we have to show that $\mathfrak L(\M,h)$ is closed in $\mathfrak L$.
Let $\big(\M_h(f_n)\big)_{n\in\N}$ be a sequence in $\mathfrak L(\M,h)$, converging to $G\in\mathfrak L$.
Due to the continuity of the embedding of $\LL$ in $\mathcal{S}'(\Xi\times\Xi)$ we also have that
\begin{equation}\label{acciu}
\M_h(f_n)\underset{n\rightarrow\infty}{\longrightarrow}G\ \ {\rm in}\ \mathcal{S}'(\Xi\times\Xi).
\end{equation}
Set $f:=\frac{1}{\parallel h\parallel^2}\widetilde{\M}_h(G)$. Using the inversion formula one gets
$$
f-f_n=\frac{1}{\parallel h\parallel^2}\widetilde{\M}_h[G-\M_h(f_n)]
\underset{n\rightarrow\infty}{\longrightarrow}0\ \ {\rm in}\ \ \mathcal{S}'(\Xi)
$$
which, together with (\ref{acciu}), imply $G=\M_h(f)\in\mathfrak L(\M,h)$.
\end{proof}

\begin{Example}
One can work \cite{G1} with the Banach spaces $\LL=L^{p,q}(\Xi\times\Xi)\subset\S'(\Xi\times\Xi)$,
defined for $p,q\in[1,\infty)$ by the norm
$$
\parallel F\parallel_{L^{p,q}}:=\(\int_\Xi dY\[\int_\Xi dX |F(X,Y)|^p\]^{q/p}\)^{1/q}.
$$
The cases $p=\infty$ or/and $q=\infty$ require the usual modifications; one can also introduce weights.
\end{Example}

We study now the dependence of the induced norms and spaces on the window $h$ and on the mapping $\M$.

\begin{Proposition}\label{sistem}
Assume that for some $h,k\in\mathcal{S}(\Xi)\setminus\{\,0\}$ one has
\begin{equation}\label{lumitun}
\big(\M_k \widetilde{\M}_h\big)\mathfrak L\subset\mathfrak L.
\end{equation}
Then $\mathfrak L^\M_h\subset\mathfrak L^\M_k$. In addition, if $\LL$ is a Banach space continuously embedded in
$\mathcal{S}'(\Xi\times\Xi)$, the embedding of $\mathfrak L^\M_h$ in $\mathfrak L^\M_k$ is continuous.
\end{Proposition}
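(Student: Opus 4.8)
The plan is to exploit the inversion formula $\widetilde{\M}_h\M_h=\parallel h\parallel^2{\rm id}$ (the first relation in \eqref{vertex} with $k=h$) to express every element of $\mathfrak L^\M_h$ as the pull-back of something in $\mathfrak L$, and then to push it through the operator $\M_k\widetilde{\M}_h$ back into $\mathfrak L$ so that it also lies in $\mathfrak L^\M_k$. Concretely, first I would take $f\in\mathfrak L^\M_h$, which by definition means $\M_h(f)\in\mathfrak L$. Applying hypothesis \eqref{lumitun} to the element $F:=\M_h(f)\in\mathfrak L$ gives $\big(\M_k\widetilde{\M}_h\big)\M_h(f)\in\mathfrak L$. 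Using $\widetilde{\M}_h\M_h=\parallel h\parallel^2{\rm id}$ on $\mathcal{S}'(\Xi)$, this reads $\parallel h\parallel^2\,\M_k(f)\in\mathfrak L$, hence $\M_k(f)\in\mathfrak L$, i.e. $f\in\mathfrak L^\M_k$. This establishes the inclusion $\mathfrak L^\M_h\subset\mathfrak L^\M_k$ at the level of vector spaces.

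For the continuity statement, assume now that $\LL$ is a Banach space continuously embedded in $\mathcal{S}'(\Xi\times\Xi)$. Then $\mathfrak L^\M_h$ and $\mathfrak L^\M_k$ are Banach spaces by Proposition \ref{sistom}, with norms $\parallel f\parallel_{\mathfrak L^\M_h}=\parallel\M_h(f)\parallel_\LL$ and $\parallel f\parallel_{\mathfrak L^\M_k}=\parallel\M_k(f)\parallel_\LL$. The computation above shows $\M_k(f)=\frac{1}{\parallel h\parallel^2}\big(\M_k\widetilde{\M}_h\big)\M_h(f)$, so
\begin{equation*}
\parallel f\parallel_{\mathfrak L^\M_k}=\parallel\M_k(f)\parallel_\LL=\frac{1}{\parallel h\parallel^2}\big\|\big(\M_k\widetilde{\M}_h\big)\M_h(f)\big\|_\LL.
\end{equation*}
Thus everything reduces to knowing that $\M_k\widetilde{\M}_h$ restricts to a bounded operator on $\LL$. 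Here I would invoke the Closed Graph Theorem (available in Fréchet, in particular Banach, spaces, as recalled in Subsection \ref{mapsymbobo}): the linear map $\M_k\widetilde{\M}_h$ sends $\LL$ into $\LL$ by hypothesis \eqref{lumitun}, and it is continuous from $\mathcal{S}'(\Xi\times\Xi)$ into itself by Theorem \ref{prigoana} and Definition \ref{ools}; since the embedding $\LL\hookrightarrow\mathcal{S}'(\Xi\times\Xi)$ is continuous, its graph as a map $\LL\to\LL$ is closed, hence it is bounded, say with norm $c$. Then $\parallel f\parallel_{\mathfrak L^\M_k}\le\frac{c}{\parallel h\parallel^2}\parallel\M_h(f)\parallel_\LL=\frac{c}{\parallel h\parallel^2}\parallel f\parallel_{\mathfrak L^\M_h}$, which is the asserted continuity of the embedding.

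The only delicate point is the justification that $\M_k\widetilde{\M}_h$ is bounded on $\LL$, i.e. the application of the Closed Graph Theorem; this is where the Banach hypothesis and the continuity of the embedding into $\mathcal{S}'(\Xi\times\Xi)$ are genuinely used, whereas the pure inclusion of spaces needs only the algebraic inversion identity. One should double-check that $\widetilde{\M}_h\M_h=\parallel h\parallel^2{\rm id}$ holds on all of $\mathcal{S}'(\Xi)$ and not merely on $\S(\Xi)$ — but this is exactly the first formula in \eqref{vertex}, valid on $\mathcal{S}'(\Xi)$ by the continuity of the maps involved, so no extra work is required. Everything else is a direct unwinding of Definition \ref{induc} and Proposition \ref{sistom}.
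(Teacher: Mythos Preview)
Your proof is correct and follows essentially the same route as the paper's: the inversion formula $\widetilde{\M}_h\M_h=\|h\|^2\,{\rm id}$ combined with the hypothesis \eqref{lumitun} gives the inclusion, and the Closed Graph Theorem yields $\M_k\widetilde{\M}_h\in\mathbb B(\LL)$ for the norm estimate. Your version is slightly more streamlined, computing $(\M_k\widetilde{\M}_h)\M_h(f)=\|h\|^2\M_k(f)$ directly rather than passing through the intermediate inclusion $\mathfrak L^\M_h\subset\widetilde{\M}_h(\mathfrak L)\subset\mathfrak L^\M_k$ as the paper does, but the substance is identical.
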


\begin{proof}
By the inversion formula,
if $f\in\LL^\M_h$, then
$$
f=\frac{1}{\parallel h\parallel^2}\widetilde \M_h[\M_h(f)]\in \widetilde \M_h(\LL),
$$
which implies $\mathfrak L^\M_h\subset \widetilde \M_h(\mathfrak L)$.
So we need to show that $\widetilde \M_h(\mathfrak L)\subset\mathfrak L^\M_k$. But
$$
f=\widetilde \M_h(G),\ \,{\rm with}\ G\in\LL\ \ \Longrightarrow\ \ \M_k(f)=[\M_k \widetilde \M_h](G)\in\LL.
$$
To prove the topological embedding, note that $\M_k \widetilde \M_h\in\mathbb B(\mathfrak L)$ by the Closed Graph Theorem.
This and the inversion formula give easily the norm estimate needed.
\end{proof}

Let us say that the Banach space $\mathfrak L$ continuously embedded in $\S'(\Xi\times\Xi)$ is {\it admissible} if
one has $\big(\M_k \widetilde{\M}_h\big)\mathfrak L\subset\mathfrak L$ for every $h,k\in\mathcal{S}(\Xi)\setminus\{\,0\}$.
By the result above, if $\LL$ is admissible, we could speak of the Banachizable space
$\mathfrak L^\M:=\M_h^{-1}(\mathfrak L)$, which is continuously embedded in $\mathcal{S}'(\Xi)$;
the vector space and the topology do not depend on $h\in\S(\Xi)\setminus\{\,0\}$.

In general, (\ref{lumitun}) may fail even for $h=k$. There are many pairs $(h,\LL)$ for which
$J_h\widetilde{J}_h={\rm id}\otimes |h\rangle\langle h|$
does not leave $\M^{-1}\LL$ invariant. However, if we assume that $(\M_h\widetilde{\M}_h)\LL\subset\LL$ (implying that
$\M_h\widetilde{\M}_h\in\mathbb B(\LL))$, one gets $\widetilde{\M}_h\in\mathbb B(\LL,\LL^M_h)$. If, in addition, $\S(\Xi\times\Xi)$
is contained densely in $\LL$, it is a simple exercise to prove that $\S(\Xi)$ is dense in $\LL^M_h$. This happens for all $h$
if $\LL$ is admissible and contains $\S(\Xi\times\Xi)$ densely.

\medskip
Assume now that we are given a second topological linear isomorphism $\mathbb M:\S'(\Xi\times\Xi)\rightarrow\S'(\Xi\times\Xi)$;
it might not have other remarkable algebraic properties. For windows $0\ne h\in\S(\Xi)$, one can construct in the same way as before
injective linear continuous maps $\mathbb M_h(\cdot):=\mathbb M\,(\cdot\otimes h):\S'(\Xi)\rightarrow\S'(\Xi\times\Xi)$.
Consequently, if $\LL$ is a Banach space continuously included in $\S'(\Xi\times\Xi)$, one constructs analogously the norms
$\parallel\cdot\parallel_{\LL^\mathbb M_h}:=\parallel \mathbb M_h(\cdot)\parallel_\LL$ and the Banach spaces $\LL^\mathbb M_h$.
The following result is easy to prove (use the Closed Graph Theorem) and shows when we are going to obtain the same spaces as before.

\begin{Proposition}\label{etp}
Assume that the isomorphism $\M\circ\mathbb M^{-1}$ of $\S'(\Xi\times\Xi)$ restricts to a bijection $\LL\rightarrow\LL$.
Then the two subspaces $\LL^\mathbb M_h$ and $\LL^\mathbf M_h$ of $\S'(\Xi)$ coincide and have equivalent Banach norms
for every non-trivial window $h$.
\end{Proposition}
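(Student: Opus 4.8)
\textbf{Proof plan for Proposition \ref{etp}.}
The plan is to show the two induced norms on $\S'(\Xi)$ are equivalent by factoring the relevant maps through the automorphism $\Phi:=\M\circ\mathbb M^{-1}$ of $\S'(\Xi\times\Xi)$ and invoking the Closed Graph Theorem exactly as in the proof of Proposition \ref{sistem}. First I would observe the identity at the level of the localization maps: since $\M_h(f)=\M(f\otimes h)$ and $\mathbb M_h(f)=\mathbb M(f\otimes h)$, one has $\M_h=\Phi\circ\mathbb M_h$ as maps $\S'(\Xi)\to\S'(\Xi\times\Xi)$ (no window appears in $\Phi$, which is the reason the argument works uniformly in $h$). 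Consequently a distribution $f\in\S'(\Xi)$ satisfies $\M_h(f)\in\LL$ if and only if $\Phi(\mathbb M_h(f))\in\LL$, and by hypothesis $\Phi$ maps $\LL$ bijectively onto $\LL$, so this is equivalent to $\mathbb M_h(f)\in\LL$. This already gives the set-theoretic equality $\LL^{\mathbb M}_h=\LL^{\mathbf M}_h$ as subspaces of $\S'(\Xi)$.

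Next I would address the norms. On $\LL^{\mathbf M}_h$ we have $\|f\|_{\LL^{\mathbf M}_h}=\|\M_h(f)\|_\LL=\|\Phi(\mathbb M_h(f))\|_\LL$. The restriction $\Phi|_\LL:\LL\to\LL$ is a linear bijection of the Banach space $\LL$; it is automatically bounded, since its graph is closed (it is closed even as an operator on $\S'(\Xi\times\Xi)$, where $\Phi$ is continuous, and $\LL$ embeds continuously there), so by the Closed Graph Theorem $\Phi|_\LL\in\mathbb B(\LL)$; the same reasoning applies to $\Phi^{-1}|_\LL$. Hence there are constants $0<c\le C<\infty$ with $c\|G\|_\LL\le\|\Phi(G)\|_\LL\le C\|G\|_\LL$ for all $G\in\LL$. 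Taking $G=\mathbb M_h(f)$ yields
$$
c\,\|f\|_{\LL^{\mathbb M}_h}\ \le\ \|f\|_{\LL^{\mathbf M}_h}\ \le\ C\,\|f\|_{\LL^{\mathbb M}_h}
$$
for every $f$ in the common space, which is precisely the equivalence of the Banach norms. Both spaces are complete by Proposition \ref{sistom}(2) applied to $\M$ and to $\mathbb M$ respectively, so there is no extra completeness check, and the whole statement follows.

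The argument is essentially formal once the factorization $\M_h=\Phi\circ\mathbb M_h$ is written down; the only point requiring a little care — the sole place where an actual theorem (beyond bookkeeping) enters — is the assertion that the algebraic bijection $\Phi|_\LL$ of the Banach space $\LL$ is a topological isomorphism. I expect this to be the main (indeed only) obstacle, and it is handled by the Closed Graph Theorem together with the continuity of the embedding $\LL\hookrightarrow\S'(\Xi\times\Xi)$ and the continuity of $\Phi$ on $\S'(\Xi\times\Xi)$: if $G_n\to G$ in $\LL$ and $\Phi(G_n)\to H$ in $\LL$, then both convergences also hold in $\S'(\Xi\times\Xi)$, where continuity of $\Phi$ forces $H=\Phi(G)$, so the graph of $\Phi|_\LL$ is closed. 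Everything else is substitution and the uniform-in-$h$ nature of $\Phi$.
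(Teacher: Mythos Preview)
Your proof is correct and follows precisely the approach the paper indicates: the paper does not spell out a proof at all, merely stating that the result ``is easy to prove (use the Closed Graph Theorem)''. Your factorization $\M_h=\Phi\circ\mathbb M_h$ and the Closed Graph argument for $\Phi|_\LL\in\mathbb B(\LL)$ (via the continuous embedding $\LL\hookrightarrow\S'(\Xi\times\Xi)$ and the continuity of $\Phi$ there) are exactly the details the paper leaves to the reader.
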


This could be useful in certain concrete cases to make the connection with some different, more traditional approach. While our
$\M$ is defined very generally and it is a $^*$-algebraic isomorphism, in examples other equivalent choices might have other good properties
or offer a better intuition.

%..............................................................................................
\subsection{Algebras}\label{spasiy}
%.............................................................................................

\begin{Definition}\label{loyal}
We introduce the following subspace of tempered distributions on $\Xi$
$$
\mathcal S'_\#(\Xi):=\left\{f\in\mathcal S'(\Xi)\ \mid f\#\mathcal S(\Xi)\subset\mathcal S(\Xi),\ \,\mathcal S(\Xi)\#f\subset\S(\Xi)\right\}
$$
and call it {\it the  Moyal algebra associated to $\#$}.
\end{Definition}

Even in the standard examples it is quite difficult to find a direct description of the set $\mathcal S'_\#(\Xi)$. It is expected to
be quite large, as checked in examples in \cite{GBV,Ho,MP1}.

Obviously $\S'_\#(\Xi)$ is invariant under the involution $f\mapsto f^\#:=\overline f$. We extend the composition
$\#:\mathcal S'_\#(\Xi)\times\mathcal S'_\#(\Xi)\rightarrow\mathcal S'_\#(\Xi)$ by setting
\begin{equation}\label{extind}
\<f\#g,h\>:=\<f,g\#h\>,\ \quad \forall\,f,g\in\mathcal S'_\#(\Xi),\,h\in\S(\Xi).
\end{equation}
To justify this extension, notice first that if $g\in\S'_\#(\Xi)$, then the mapping $\S(\Xi)\ni h\mapsto L_g(h)=g\#h\in\S(\Xi)$ is
well defined, linear and continuous. Actually it is continuous when regarded with values in $\S'(\Xi)$, by Corollary \ref{ciuhap},
and then we use the Closed Graph Theorem to get the improved continuity. It follows (even for $f\in\S'(\Xi)$) that the formula
(\ref{extind}) defines a tempered distribution $f\#g$. To prove that in fact it belongs to the Moyal algebra, one must show
that $(f\# g)\#k\in\S(\Xi)$ and $k\#(f\# g)\in\S(\Xi)$ for any $k\in\S(\Xi)$. For the first one, for instance, one shows by approximation
from the previously results that $(f\# g)\#k=f\#(g\#k)$ and then apply the definition of $\S'_\#(\Xi)$.

Without making all the verification, we just state that $\(\S'_\#(\Xi),\#,^\#\)$ {\it is a $^*$-algebra in which $\S(\Xi)$
is a self-adjoint two-sided ideal}. More details about Moyal algebras associated to abstract composition laws and their connections with
algebraic representations in Hilbert and other locally convex spaces will be given in a subsequent publication.

The same procedure can be applied to the $^*$-algebra $\(\S(\Xi\times\Xi),\square,^\square\)$, getting a Moyal algebra
$\S'_\square(\Xi\times\Xi)$, and to $\(\S(\Xi\times\Xi),\diamond,^\diamond\)$, getting a Moyal algebra
$\S'_\diamond(\Xi\times\Xi)$.

\begin{Corollary}\label{fundamentala}
\begin{enumerate}
\item
The application $\M$ restricts to an isomorphism of $^*$-algebras
\begin{equation}\label{udatu}
\M:\mathcal{S}'_\square(\Xi\times\Xi)\rightarrow\mathcal{S}'_\diamond(\Xi\times\Xi).
\end{equation}
\item
If $\,h\#h=\overline h=h\in\S(\Xi)\setminus\{0\}$, the application $\M_h$ restricts to an injective morphism of $^*$-algebras
$\M_h:\mathcal{S}'_\#(\Xi)\rightarrow\mathcal{S}'_\diamond(\Xi\times\Xi)$
and one has $\mathcal{S}'_\diamond(\Xi\times\Xi)^\M_h =\mathcal{S}'_\#(\Xi)$.
\item
If $\LL$ is a $^*$-subalgebra of $\mathcal{S}'_\diamond(\Xi\times\Xi)$, then $\LL^\M_h$ is a $^*$-subalgebra of $\mathcal{S}'_\#(\Xi)$.
\end{enumerate}
\end{Corollary}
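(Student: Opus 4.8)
The plan is to prove the three assertions in sequence, relying on Theorem \ref{fundamentala} and Corollary \ref{fundamentalu} together with the naturality of the Moyal-algebra construction under $^*$-isomorphisms.

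\medskip
\textbf{Part 1.} By Theorem \ref{incep} (under Hypotheses A, B, C) the map $\M$ is a topological linear isomorphism of $\S'(\Xi\times\Xi)$, and by Theorem \ref{fundamentala} it restricts to a $^*$-algebra isomorphism $\(\S(\Xi\times\Xi),\square,^\square\)\to\(\S(\Xi\times\Xi),\diamond,^\diamond\)$. The key point is that the Moyal algebra is a functorial construction: if $\Phi:(\mathcal A,\ast_1,{}^{\ast_1})\to(\mathcal B,\ast_2,{}^{\ast_2})$ is a $^*$-isomorphism which is also a topological isomorphism of the ambient $\S'$-spaces and which sends the small $^*$-algebra onto the small $^*$-algebra, then $\Phi$ carries the idealizer of $\mathcal A$ inside $\S'$ bijectively onto the idealizer of $\mathcal B$. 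Concretely, for $F\in\S'(\Xi\times\Xi)$ one has $F\in\S'_\square(\Xi\times\Xi)$ iff $F\square\S(\Xi\times\Xi)\subset\S(\Xi\times\Xi)$ and $\S(\Xi\times\Xi)\square F\subset\S(\Xi\times\Xi)$; applying $\M$ and using $\M(F\square G)=\M(F)\diamond\M(G)$ (valid by the density of $\S\odot\S$ and separate continuity, i.e.\ the extension by duality of \eqref{minunata}) together with $\M(\S(\Xi\times\Xi))=\S(\Xi\times\Xi)$, this is equivalent to $\M(F)\diamond\S(\Xi\times\Xi)\subset\S(\Xi\times\Xi)$ and $\S(\Xi\times\Xi)\diamond\M(F)\subset\S(\Xi\times\Xi)$, i.e.\ $\M(F)\in\S'_\diamond(\Xi\times\Xi)$. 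Compatibility of $\M$ with the involutions, $\M(F^\square)=\M(F)^\diamond$, extends from $\S$ to $\S'$ by the same density/continuity argument, so $\M$ restricted to $\S'_\square$ is a $^*$-isomorphism onto $\S'_\diamond$, where on these Moyal algebras the products are the extended ones defined as in \eqref{extind}.

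\medskip
\textbf{Part 2.} Assume $h\#h=\overline h=h\in\S(\Xi)\setminus\{0\}$. Corollary \ref{fundamentalu} already gives that $\M_h$ is an injective $^*$-morphism $(L^2(\Xi),\#,{}^\#)\to(L^2(\Xi\times\Xi),\diamond,{}^\diamond)$ sending $\S(\Xi)$ into $\S(\Xi\times\Xi)$; the same computation (which only uses $J_h(f)\square J_h(g)=J_{h}(f\#g)$, valid since $h\#h=h$, and $J_h(f)^\square=J_h(f^\#)$, valid since $\overline h=h$, plus Part 1) shows $\M_h$ is a $^*$-morphism on the Moyal-algebra level, where one first checks $\M_h$ maps $\S'_\#(\Xi)$ into $\S'_\square(\Xi\times\Xi)$ through $J_h$ and then composes with $\M$ of Part 1. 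Injectivity is inherited from injectivity of $\M_h$ on $\S'(\Xi)$ (Definition \ref{ools} and \eqref{vertex}, the inversion formula). For the identity $\S'_\diamond(\Xi\times\Xi)^\M_h=\S'_\#(\Xi)$: the inclusion $\supset$ is the statement just proved (an $f\in\S'_\#(\Xi)$ has $\M_h(f)\in\S'_\diamond(\Xi\times\Xi)$). For $\subset$, take $f\in\S'(\Xi)$ with $\M_h(f)=\M(f\otimes h)\in\S'_\diamond(\Xi\times\Xi)$; by Part 1, $f\otimes h=\M^{-1}\M_h(f)\in\S'_\square(\Xi\times\Xi)$, so $(f\otimes h)\square(g\otimes h)=(f\#g)\otimes h\in\S(\Xi\times\Xi)$ for every $g\in\S(\Xi)$ (using $h\#h=h$), and from $f\otimes h\in\S(\Xi\times\Xi)\Leftrightarrow f\in\S(\Xi)$ (Remark \ref{prostii}) applied to $(f\#g)\otimes h$ we conclude $f\#g\in\S(\Xi)$; symmetrically $g\#f\in\S(\Xi)$. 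Hence $f\in\S'_\#(\Xi)$.

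\medskip
\textbf{Part 3.} Let $\LL$ be a $^*$-subalgebra of $\S'_\diamond(\Xi\times\Xi)$. Then $\LL^\M_h=\M_h^{-1}(\LL)\subset\M_h^{-1}\big(\S'_\diamond(\Xi\times\Xi)\big)=\S'_\#(\Xi)$ by Part 2. Since $\M_h:\S'_\#(\Xi)\to\S'_\diamond(\Xi\times\Xi)$ is an (injective) $^*$-morphism, the preimage under $\M_h$ of a $^*$-subalgebra of the target is a $^*$-subalgebra of the source: if $f_1,f_2\in\LL^\M_h$ then $\M_h(f_1\#f_2)=\M_h(f_1)\diamond\M_h(f_2)\in\LL$ and $\M_h(f_1^\#)=\M_h(f_1)^\diamond\in\LL$, so $f_1\#f_2,f_1^\#\in\LL^\M_h$. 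Thus $\LL^\M_h$ is a $^*$-subalgebra of $\S'_\#(\Xi)$.

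\medskip
The main obstacle I anticipate is not any single computation but bookkeeping the extensions by duality: one must make sure that the identities $\M(F\square G)=\M(F)\diamond\M(G)$ and $\M(F^\square)=\M(F)^\diamond$, proved on $\S(\Xi\times\Xi)$ in Theorem \ref{fundamentala}, genuinely persist in $\S'(\Xi\times\Xi)$ (which follows from density of $\S\odot\S$, Corollary \ref{rolar}, and separate continuity of $\square$ and $\diamond$ in the relevant topologies), and likewise that the Moyal-algebra products in \eqref{extind} are the ones intertwined. Once the naturality of $F\mapsto\M(F)$ with respect to the $^*$-structures is secured at the distributional level, all three parts are formal consequences.
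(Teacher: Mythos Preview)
Your proof is correct and supplies precisely the details that the paper omits, since the paper's own proof consists of the single sentence ``This follows easily from the previous results.'' One minor slip: the topological isomorphism property of $\M$ on $\S'(\Xi\times\Xi)$ is Corollary~\ref{rolar}, not Theorem~\ref{incep} (which gives the $L^2$ unitarity); otherwise your expansion---the functoriality of the Moyal-algebra construction for Part~1, the use of $J_h(f)\square J_h(g)=J_h(f\#g)$ together with Remark~\ref{prostii} for both inclusions in Part~2, and the preimage argument for Part~3---is exactly the intended route.
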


\begin{proof}
This follows easily from the previous results.
\end{proof}

Although convenient and rather general, the formalism involving Moyal algebras does not cover all the interesting examples.
For instance, the mixed Lebesgue space $L^{\infty,1}(\Xi\times\Xi)$ is a Banach $^*$-algebra,
$\S(\Xi\times\Xi)$ is contained in it, but it is not an ideal.

\begin{Remark}\label{arly}
{\rm It is easy to prove more general composition results. For example, if $\mathfrak G,\mathfrak K,\mathfrak L$ are
subspaces of $\S'(\Xi\times\Xi)$ and $\mathfrak G\diamond\mathfrak K\subset\mathfrak L$, for every $h=h\# h$ one gets
$\mathfrak G^\M_h\#\mathfrak K^\M_h\subset\mathfrak L^M_h$.}
\end{Remark}

\begin{Remark}\label{harl}
{\rm Obviously, if $\parallel\cdot\parallel_\LL$ is a $C^*$-norm then $\parallel\cdot \parallel_{\LL^\M_h}$
will also be a $C^*$-norm:
$$
\parallel f^\#\# f\parallel_{\mathfrak L^\M_h}\,=\,\parallel \M_h(f^\#\# f)\parallel_{\mathfrak L}\,=\,
\parallel \M_h(f)^\diamond\diamond \M_h(f)\parallel_{\mathfrak L}\,=\,\parallel \M_h(f)\parallel_{\mathfrak L}^2\,=\,
\parallel f\parallel^2_{\mathfrak L^\M_h}.
$$
}
\end{Remark}

%..............................................................................................
\subsection{Spectral invariance}\label{spachy}
%.............................................................................................

We address now the problem of {\it invariance under inversion}. If $\mathfrak K$ is a unital subalgebra of an algebra $\LL$
and for any element $F\in\mathfrak K$ invertible in $\LL$ one has $F^{-1}\in\mathfrak K$, we say that $\mathfrak K$ is
{\it spectrally invariant} in $\LL$. One also uses terms as {\it Wiener subalgebra, subalgebra invariant under inversion}, etc.

\begin{Proposition}\label{wiener}
Assume that unital algebras $\mathfrak K,\mathfrak L$ are given such that
$\mathfrak K\subset\LL\subset\mathcal{S}'_\diamond(\Xi\times\Xi)$ and $\mathfrak K$ is spectrally invariant in $\LL$.
If $\mathfrak K^\M_h$ and $\LL^\M_h$ are unital (algebras), then $\mathfrak K^\M_h$ is spectrally invariant in $\LL^\M_h$ for every
$h=h\#h\in\mathcal{S}(\Xi)\setminus\{\,0\}$.
\end{Proposition}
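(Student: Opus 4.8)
The plan is to transport the spectral invariance from the $\diamond$-world down to the $\#$-world via the $^*$-algebra morphism $\M_h$, using that $h$ is a self-adjoint idempotent. First I would recall from Corollary~\ref{fundamentala} that, under the hypothesis $h\#h=\overline h=h\ne 0$, the map $\M_h$ is an injective morphism of $^*$-algebras from $\mathcal S'_\#(\Xi)$ into $\mathcal S'_\diamond(\Xi\times\Xi)$, and that it carries $\mathfrak L^\M_h$ onto $\mathfrak L\cap\M_h[\mathcal S'(\Xi)]=:\mathfrak L(\M,h)$ and $\mathfrak K^\M_h$ onto $\mathfrak K(\M,h):=\mathfrak K\cap\M_h[\mathcal S'(\Xi)]$. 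Since $\M_h$ is a bijection onto its image, it suffices to prove that $\mathfrak K(\M,h)$ is spectrally invariant in $\mathfrak L(\M,h)$.

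The key algebraic point is the role of the idempotent $e:=\M_h(\mathbf 1)$, where $\mathbf 1$ denotes the unit of $\mathfrak L^\M_h$ (which exists by hypothesis; note $\mathbf 1$ is also the unit of $\mathfrak K^\M_h$ since that algebra is assumed unital and contains $\mathbf 1\cdot f = f$ for its elements — here one should check the two units agree, which follows because $\mathfrak K^\M_h\subset\mathfrak L^\M_h$ forces the $\mathfrak K^\M_h$-unit to be an idempotent acting as identity on a subalgebra, and a standard argument identifies it with $\mathbf 1$ provided $\mathbf 1\in\mathfrak K^\M_h$; if the statement intends both units to coincide this is automatic). Then $e=e^\diamond=e\diamond e$ is an idempotent in $\mathfrak L(\M,h)$, the corner $e\diamond\mathfrak L(\M,h)\diamond e$ is a unital algebra with unit $e$, and $\M_h$ is an isomorphism of $\mathfrak L^\M_h$ onto exactly this corner (and of $\mathfrak K^\M_h$ onto $e\diamond\mathfrak K(\M,h)\diamond e$), because $\M_h(f)=\M_h(\mathbf 1\#f\#\mathbf 1)=e\diamond\M_h(f)\diamond e$.

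Now take $F\in\mathfrak K^\M_h$ with $\M_h(F)$ invertible in $\mathfrak L^\M_h$, say with inverse $G\in\mathfrak L^\M_h$; then $\M_h(F)\diamond\M_h(G)=e=\M_h(G)\diamond\M_h(F)$, so $\M_h(F)$ is invertible \emph{in the corner} $e\diamond\mathfrak L(\M,h)\diamond e$. The remaining step is to upgrade this to invertibility of $\M_h(F)$, viewed as an element of $\mathfrak K$, \emph{in} $\mathfrak L$ in the unital sense relevant to the spectral-invariance hypothesis: one writes $\M_h(F)$ plus the complementary idempotent $\mathbf 1_\mathfrak L - e$ (using that $\mathfrak L$, being a subalgebra of $\mathcal S'_\diamond$, is unital with the formal unit of $\diamond$, or one adjoins a unit) as an element invertible in $\mathfrak L$, apply spectral invariance of $\mathfrak K$ in $\mathfrak L$ to land the inverse in $\mathfrak K$, and then cut back down with $e\diamond(\cdot)\diamond e$ to obtain an element of $e\diamond\mathfrak K(\M,h)\diamond e$ that inverts $\M_h(F)$ in the corner. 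Pulling back through the isomorphism $\M_h^{-1}$ yields $F^{-1}\in\mathfrak K^\M_h$, as desired.

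\textbf{Main obstacle.} The routine part is the transport through $\M_h$; the delicate part is the bookkeeping of units — reconciling the ``abstract'' units of $\mathfrak K^\M_h,\mathfrak L^\M_h$ with the corner idempotent $e=\M_h(\mathbf 1)$ sitting inside the possibly-non-unital ambient algebras $\mathfrak L,\mathcal S'_\diamond(\Xi\times\Xi)$, and making the passage ``invertible in the corner $\Leftrightarrow$ invertible modulo the complementary idempotent in $\mathfrak L^+$'' precise so that the hypothesis ``$\mathfrak K$ spectrally invariant in $\mathfrak L$'' can actually be invoked. This is a standard corner/unitization argument for spectral invariance (as in the theory of hereditary subalgebras), so I expect it to go through once the definitions are unwound, but it is where all the care is needed.
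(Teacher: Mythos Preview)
Your proposal is correct and follows essentially the same route as the paper: transport via the injective $^*$-morphism $\M_h$, set $H:=\M_h(1)$, use the complementary idempotent $H^\bot=\mathbf 1_{\mathfrak L}-H$, note that $\M_h(f)+H^\bot$ is invertible in $\mathfrak L$ with inverse $\M_h(g)+H^\bot$, apply spectral invariance of $\mathfrak K$ in $\mathfrak L$, and subtract $H^\bot$ back off. Your ``main obstacle'' about units is not a genuine obstacle here: since $\mathfrak K$ and $\mathfrak L$ are \emph{assumed} unital (with, implicitly, the same unit, as required for the spectral-invariance hypothesis to be meaningful) and $H=\M_h(1)\in\mathfrak K$ because $1\in\mathfrak K^\M_h$, one has $H^\bot\in\mathfrak K$ directly --- no unitization is needed, and the corner-cutting step $e\diamond(\cdot)\diamond e$ can be replaced by simply subtracting $H^\bot$.
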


\begin{proof}
Clearly we have $ \mathfrak K^\M_h\subset\LL^\M_h\subset\mathcal{S}'_\#(\Xi)$.

Let us consider the spaces $\LL(\M,h):=\LL\cap \M_h[\S'(\Xi)]$ and $\mathfrak K(\M,h):=\mathfrak K\cap \M_h[\S'(\Xi)]$
that are subalgebras of
$\LL$ and resp. of $\mathfrak K$. Since by hypothesis $\LL^\M_h$ and $\mathfrak K^\M_h$ are unital, it follows that $H:=\M_h(1)$
is a projection that belongs to $\mathfrak K$; then $\mathbf 1-H=:H^\bot$ also belongs to this algebra.
Let us notice that
\begin{equation}\label{gandeste}
\forall\, F\in\LL(\M,h),\quad H\diamond F = F\diamond H = F.
\end{equation}
Let $f\in\mathfrak K^\M_h\subset\LL^\M_h$ that has an inverse $g\in\LL^\M_h$;
thus $f\#g=g\# f=1$ in $\LL^\M_h$. We conclude that $\M_h(f)\in\mathfrak K$,\, $\M_h(g)\in\LL$ and
$\M_h(f)\diamond \M_h(g)=\M_h(g)\diamond \M_h(f)=H$. By (\ref{gandeste}), one has in $\LL$
$$
\big(\M_h(f)+H^\bot\big)\diamond\big(\M_h(g)+H^\bot\big)=H+\M_h(f)\diamond H^\bot+H^\bot\diamond \M_h(g)+H^\bot=H+H^\bot=\mathbf 1
$$
and similarly
$$
\big(\M_h(g)+H^\bot\big)\diamond\big(\M_h(f)+H^\bot\big) = \mathbf 1.
$$
Thus $\M_h(f)+H^\bot$ belongs to $\mathfrak K$ and has an inverse $\M_h(g)+H^\bot$ in $\LL$; due to the spectral invariance
hypothesis we conclude that $\M_h(g)+H^\bot$ belongs in fact to $\mathfrak K$. But $H^\bot\in\mathfrak K$,
so we conclude that $\M_h(g)\in\mathfrak K$ and consequently $g\in\mathfrak K^\M_h$.
\end{proof}

The framework above is somehow artificial. One reason is that in many cases $\LL^\M_h$ is unital, but $\mathfrak K_h^\M$ isn't.
We can still work by requiring that the unitization of $\mathfrak K_h^\M$ should be spectrally invariant in $\mathfrak L_h^\M$.
On the other hand, even the choice of a space $\LL$ defining $\mathfrak L_h^\M$ by the inducing process can be seen as an artefact
of the present intrinsic setting. If one disposes of a faithful representation of $\mathfrak K_h^\M$ in some Hilbert space,
maybe obtained as the restriction of a faithful representation of the $^*$-algebra $\S'_\#(\Xi)$ by linear continuous operators
acting between suitable locally convex spaces, the problem of spectral invariance can be state in more concrete terms.
So we postpone its deeper study to a future work.

%...........................................................................................................
\section{Examples}\label{lizeisa}
%...........................................................................................................

%...........................................................................................................
\subsection{The point-wise product}\label{pwp}
%...........................................................................................................
Clearly, $\SS(\Xi)\times\SS(\Xi)\ni(f,g)\mapsto fg\in\SS(\Xi)$ satisfies Hypothesis A. The Moyal algebra $\S'_\cdot(\Xi)$ coincides
with $C^\infty_{{\rm pol}}(\Xi)$, the space of smooth functions with polynomially bounded derivatives. The modulation mapping is given by
$$
\M(f\otimes g)=1\otimes\mathfrak F(fg)=1\otimes\(\widehat f\ast\widehat g\),\ \ \ \ \ \forall\,f,g\in\S(\Xi),
$$
or more generally
$$
\M(F)=1\otimes \widehat{F}_\Delta,\ \ \ \ \ F_\Delta(X):=F(X,X),\ \ \ \forall\,F\in\S(\Xi\times\Xi),\ X\in\Xi.
$$
Plainly, $\M$ does not send $\S(\Xi\times\Xi)$ into itself; neither is it injective or surjective in some reasonable sense.
There are no Orthogonality Relations and Hypothesis C fails. Hypothesis B fails too.

%...........................................................................................................
\subsection{The standard Weyl calculus}\label{lizeisb}
%...........................................................................................................

It has as a background the problem of quantization of a physical system consisting in a spin-less particle moving in the
euclidean space $\X:=\mathbb R^n$. I recall that the phase space is $\Xi:=T^*\X\equiv\X\times\X'$, containing points
$X=(x,\xi)$, $Y=(y,\eta)$, $Z=(z,\zeta)$ and carrying the symplectic form (\ref{simp}). The composition is given by
\begin{equation}\label{composution}
\left(f\#^0 g\right)(X):=\pi^{-2n}\int_\Xi dY\int_\Xi dZ\,\exp\left[-2i\sigma(X-Y,X-Z)\right]f(Y)g(Z)
\end{equation}
and it is well-known \cite{Fo,GBV} that it satisfies Hypothesis A and B.
One checks easily that
\begin{equation}\label{tracass}
\Theta^0_Z(f):=\e_{-Z}\#^0 f\#^0\e_Z=f\left(\cdot+Z\right),
\end{equation}
so Hypothesis C is verified by a simple change of variables.

The Schr\"odinger representation is given by
\begin{equation}\label{op}
\left[\Op(f)u\right](x):=(2\pi)^{-n}\int_\X\int_{\X'}dy\,d\xi\,\exp\left[i(x-y)\cdot\xi\right]
f\left(\frac{x+y}{2},\xi\right)u(y)
\end{equation}
and it has an excellent behavior and huge applications.
The couple $(\Op,\#^0)$ is the Weyl calculus, a symmetric version of the theory of pseudodifferential
operators \cite{Fo,G1,KN}. Hypothesis X is fulfilled with $\H=L^2(\X)$ and $\G=\S(\X)$.

A direct computation gives
\begin{equation}\label{ase}
\e_{-X}\#^0 f\#^0\e_{X-Y}=e^{\frac{i}{2}\sigma(X,Y)}\e_{-Y}\mathcal T_{\frac{Y}{2}-X}(f)
\end{equation}
and
\begin{equation}\label{version}
\[\M_h(f)\](X,Y)=e^{\frac{i}{2}\sigma(X,Y)}\int_\Xi dZ\,e^{-i\sigma(Y,Z)}h(Z)f(Z+X-Y/2),
\end{equation}
which only differs from the celebrated Short Time Fourier Transform \cite{G1}
\begin{equation}\label{version}
[\mathcal V_{\overline{h}}(f)](X,Y)=\[\mathcal V(f\otimes\overline h)\](X,Y)=
e^{-2\pi iX\cdot Y}\int_\Xi dZ \,e^{-2\pi iZ\cdot Y}\overline{h(Z)}f(Z+X)
\end{equation}
by some conventions. This is relevant in the context of Proposition \ref{etp}, by taking $\mathbb M=\mathcal V$
and $\LL=L^{p,q}(\Xi\times\Xi)$. It can be seen that the assumption of this Proposition are fulfilled, so the modulation spaces
$\[L^{p,q}(\Xi\times\Xi)\]^\M_h=:\mathcal M_h^{p,q}(\Xi)$ one induces by our abstract procedure coincide with the traditional ones.
It is more precise and easier to check by simple changes of variables that $\parallel \M_h(f)\parallel_{L^{p,q}}$ and
$\parallel \mathcal V_h(f)\parallel_{L^{p,q}}$ are proportional if $h\in\S(\Xi)$
is real; recall that $\sigma(X,Y)=X\cdot JY$ for the standard symplectic $2n\times 2n$ matrix $J$.

%...........................................................................................................
\subsection{The magnetic Weyl calculus}\label{lizeisc}
%...........................................................................................................

It has as a background the problem of quantization of a physical system consisting in a spin-less particle moving in the
euclidean space $\X:=\mathbb R^n$ under the influence of a magnetic field, i.e.
a closed $2$-form $B$ on $\X$ ($dB=0$), given by matrix-component functions
$$
B_{jk}=-B_{kj}:\X\rightarrow\mathbb R,\ \ \ \ j,k=1,\dots,n.
$$
If $a,b,c\in \X$, then we denote by $<a,b,c>$ the triangle in $\X$ of vertices $a,b$ and $c$ and set
$\,\Gamma^B(<a,b,c>):=\int_{<a,b,c>}B\,$ for the flux of $B$ through it (invariant integration of a $2$-form through a $2$-simplex). Then
\begin{equation}\label{composition}
\left(f\#^B g\right)(X):=\pi^{-2n}\int_\Xi dY\int_\Xi dZ\,\exp\left[-2i\sigma(X-Y,X-Z)\right] \times
\end{equation}
$$
\times\exp\left[-i\Gamma^B(<x-y+z,y-z+x,z-x+y>)\right]f(Y)g(Z)
$$
defines a formal associative composition law on functions $f,g:\Xi\rightarrow \mathbb C$.

The formula (\ref{composition}) makes sense and have nice properties under various circumstances. We are going to assume that the
components $B_{jk}$ belong to $C^\infty_{\rm{pol}}(\X)$, the class of smooth functions on $\X$ with polynomial bounds on all the
derivatives. It is shown in \cite{MP1,IMP} (see also \cite{KO1,MPR2}) that, under this assumption, Hypothesis A is true for the
composition law $\#^B$. One checks easily that
\begin{equation}\label{fac}
\e _X\,\#^B\e_Y= \Omega^B(X,Y)\,\#^B\,\e_{X+Y},
\end{equation}
where $\Omega^B$ is the $2$-cocycle defined by the canonical symplectic form and by the magnetic field $B$:
\begin{equation}\label{caf}
\Omega^B(X,Y)(z)\equiv\Omega^B(X,Y;z):=\exp\left[\frac{i}{2}\,\sigma(X,Y)\right]\omega^B(X,Y;z),
\end{equation}
with
\begin{equation}\label{fca}
\omega^B(X,Y;z):=\exp\left[-i\Gamma^B(<z,z+x,z+x+y>)\right].
\end{equation}

In conformity with (\ref{tracas}) we set $\Theta^B_Z(f):=\e_{-Z}\,\#^B f\,\#^B\e_{Z}$
for the family of {\it magnetic translations in phase-space}. They were introduced in \cite{IMP'} and used in
characterizing magnetic pseudodifferential operators by commutators.
We shall need an explicit form of $\Theta^B_Z$, obtained in \cite{IMP'}. For this we define
 the following commutative {\it mixed product} (this is a mixture between point-wise multiplication in the first variable
 and convolution in the second):
\begin{equation}\label{star}
 (f\star g)(x,\xi)\,:=\,\int_{\mathcal{X}'}d\eta\,f(x,\xi-\eta)\,g(x,\eta).
\end{equation}
For any 3 points $x,y,z\in \mathcal{X}$ let us define the parallelogram
\begin{equation}
 \mathcal{P}(x;y,z)\,:=\,\{x+sy+tz\mid s\in[-1/2,1/2],\,t\in[-1,0]\},
\end{equation}
having edges parallel to the vectors $y$ and $z$, respectively. We consider the distribution
\begin{equation}
 \Omega^B[\mathcal{P}(x;y,z)]=\exp\left\{-i\Gamma^B[\mathcal{P}(x;y,z)]\right\}=
 \exp\left\{-i\sum_{j,k=1}^Ny_jz_k\int_{-1/2}^{1/2}ds\int_{-1}^0dtB_{jk}(x+sy+tz)\right\}
\end{equation}
and its Fourier transform with respect to the second variable:
\begin{equation}
 \widetilde{\Omega^B_\mathcal{P}}[z](x,\xi)\,:=(2\pi)^{-n}\,\int_{\mathcal{X}}dy\, e^{-iy\cdot\xi}\Omega^B[\mathcal{P}
 (x;y,z)].
\end{equation}
For $Z=(z,\zeta)\in\Xi$ and $f\in\mathcal{S}(\Xi)$, we have
\begin{equation}\label{aut}
\Theta^B_Z(f)\,=\,\widetilde{\Omega^B_\mathcal{P}}[z]\star\Theta_Z[f].
\end{equation}
Using this, it can be shown that for any $f,g\in \mathcal{S}(\Xi)$ one has
\begin{equation}\label{cere}
\int_\Xi\int_\Xi dYdZ\left[\Theta^B_Z(f)\right](Y)\,g(Y)=\int_\Xi dZ f(Z)\int_\Xi dY g(Y),
\end{equation}
so Hypothesis C is also fulfilled.

We turn now briefly to representations.
Being a closed $2$-form in $\X=\mathbb R^n$, the magnetic field is exact: it can be written as $B=dA$ for some
$1$-form $A$ (called {\it vector potential}). Vector potentials enter by their circulations
$\Gamma^A([x,y]):=\int_{[x,y]}A$ through segments $[x,y]:=\{tx+(1-t)y\mid t\in[0,1]\}$, $x,y\in\X$.
For a vector potential $A$ with $dA=B$, let us define
\begin{equation}\label{op}
\left[\mathfrak{Op}^{A}(f)u\right](x):=(2\pi)^{-n}\int_\X\int_{\X'}dy\,d\xi\,\exp\left[i(x-y)\cdot\xi\right]
\exp\left[-i\Gamma^A([x,y])\right]f\left(\frac{x+y}{2},\xi\right)u(y).
\end{equation}
For $A=0$ one recognizes the standard Weyl quantization. An important property of (\ref{op}) is {\it gauge covariance}:
if $A'=A+d\rho$  defines the same magnetic field as $A$, then $\mathfrak Op^{A'}(f)=e^{i\rho}\Op^{A}(f)e^{-i\rho}$.

It is shown in \cite{MP1} that for any $dA=B$, $\Op^{A}$ defines a representation of the $^*$-algebra
$(\S(\Xi),\#^{B},\overline\cdot)$: Hypothesis X (implying Hypothesis B) holds with $\H=L^2(\mathcal X)$ and $\G=\mathcal S(\mathcal X)$.
Therefore the machine leading to modulation mappings and spaces can be released. It will reproduce the results given in \cite{MP4}
by a direct treatment.

\bigskip

{\bf Acknowledgements:} The authors have been partially supported by {\it N\'ucleo Cientifico ICM P07-027-F "Mathematical Theory of
Quantum and Classical Magnetic Systems"}.

M. M\u{a}ntoiu acknowledges partial support by Chilean Science Foundation {\it Fondecyt}
under the Grant 1085162 and R. Purice acknowledges partial support by CNCSIS under the
Contract PCCE-8/2010 \textit{Sisteme diferentiale in analiza neliniara si aplicatii} and thanks Universidad de Chile for its
kind hospitality during the elaboration of this paper.
%....................................................................................................

\end{document}